\pgfplotsset{compat=1.10}
\newtheorem{theorem}{Theorem}[section]
\newtheorem{corollary}[theorem]{Corollary}
\newtheorem{lemma}[theorem]{Lemma}
\newtheorem{definition}[theorem]{Definition}
\newtheorem{remark}[theorem]{Remark}
\newtheorem{example}[theorem]{Example}
\DeclareMathAlphabet{\mathpzc}{OT1}{pzc}{m}{it}
\newcommand{\rd}{\mathrm{d}}
\definecolor{cadmiumgreen}{rgb}{0.0, 0.42, 0.24}
\numberwithin{equation}{section}
\title[An evolution model for MEMS]{Variational solutions to an evolution model for MEMS\\ with heterogeneous dielectric properties}
\author{Philippe Lauren\c{c}ot}
\address{Institut de Math\'ematiques de Toulouse, UMR~5219, Universit\'e de Toulouse, CNRS \\ F--31062 Toulouse Cedex 9, France}
\email{laurenco@math.univ-toulouse.fr}
\thanks{Partially supported by the CNRS Projet International de Coop\'eration Scientifique PICS07710}
\author{Christoph Walker}
\address{Leibniz Universit\"at Hannover\\ Institut f\" ur Angewandte Mathematik \\ Welfengarten 1 \\ D--30167 Hannover\\ Germany}
\email{walker@ifam.uni-hannover.de}
\begin{document}
%%%%%%%%%%%%%%%%
%%%%%%%%%%%%%%%%

\date{\today}

%%%%%%%%%%%%%%%%
%%%%%%%%%%%%%%%%
\begin{abstract}
The existence of weak solutions to the obstacle problem for a nonlocal semilinear fourth-order parabolic equation is shown, using its underlying gradient flow structure. The model governs the dynamics of a microelectromechanical system with heterogeneous dielectric properties.
\end{abstract}
%%%%%%%%%%%%%%%%
%%%%%%%%%%%%%%%%

\keywords{MEMS, gradient flow, transmission problem, obstacle problem, fourth-order equation}
\subjclass[2010]{ 35K86, 74H20, 35Q74, 35M86, 35K25}
%\dedicatory{dedicated to Michel Pierre on the occasion of his $70$th birthday}

\maketitle

%%%%%%%%%%%%%%%%
%%%%%%%%%%%%%%%%
\section{Introduction}
%%%%%%%%%%%%%%%%
%%%%%%%%%%%%%%%%

The existence of variational solutions is shown for an evolution problem describing the space-time dynamics of a microelectromechanical system (MEMS) with heterogeneous dielectric properties. Specifically, a MEMS device such a switch is made of a thin rigid conducting plate above which a thin conducting elastic plate is suspended and clamped at its boundary. The shape of the undeformed elastic plate is identical to that of of the rigid one. Holding the two plates at different electrostatic potentials generates a deformation of the top elastic plate to compensate the induced electrostatic force. It is by now well-known that a sufficiently large potential difference can lead to a \textit{pull-in instability} or \textit{touchdown}, a situation which corresponds to the top plate coming into contact with the bottom one and results in a short circuit due to the potential difference \cite{AmEtal,BG01,LWBible,Pel01a,PeB03}. Clearly, such a phenomenon may alter the properties or the operating conditions of the MEMS device. However, it can be prevented, for instance, by covering the ground plate with an insulating layer of positive thickness \cite{AmEtal,BG01,LLG14,LLG15}. We consider this situation herein and thus assume that the bottom plate is covered by a non-deformable layer of positive thickness, possibly having heterogeneous dielectric properties characterized by a permittivity $\sigma_1>0$, which differs from the constant permittivity $\sigma_2>0$ of the surrounding medium. Touchdown may still occur, in the sense that the top plate may come into contact with the upper side of the insulating layer. However, such a situation does not generate a singularity, as the top plate cannot penetrate the layer. Assuming further that the physical state of the MEMS device is fully described by the deformation $u$ of the top plate and the electrostatic potential $\psi_u$ between the two plates, the dynamics of the MEMS is then governed by the competition between mechanical and electrostatic forces, and is given by a time relaxation towards critical points of the total energy, the latter including mechanical, contact, and electrostatic contributions. 

To convert this rough description of the model into mathematical equations, we assume that there is no variation in one of the two horizontal directions and consider a two-dimensional MEMS device in which the rigid ground plate and the undeformed elastic plate have the same one-dimensional shape $D:=(-L,L)$, $L>0$, the former being located at height $z=-H-d$, $H>0$, $d>0$, while the latter is clamped at its boundary $(\pm L,0)$. The bottom plate is then $D\times \{-H-d\}$ and it is covered by an insulating layer 
\begin{equation*}
\Omega_1 := D\times (-H-d,-H)
\end{equation*} 
of positive thickness $d$. The dielectric properties of $\Omega_1$ are characterized by the permittivity $\sigma_1>0$ which may vary with the horizontal coordinate $x\in D$ and the height $z\in (-H-d,-H)$. As for the elastic top plate, we assume that the relevant physical framework is restricted to small deformations and, denoting the vertical deformation of the top plate by $u$, the top plate is given by
\begin{equation*}
\mathfrak{G}(u) := \{ (x,z)\in D\times\mathbb{R}\ :\ z=u(x)\}\,. 
\end{equation*}
Recalling that the top plate cannot deformed beyond the surface $\Sigma:= D\times \{-H\}$ of the insulated layer $\Omega_1$, the deformation $u$ is bounded from below by $-H$. The region $\Omega_2(u)$ between the top plate and the surface of the insulating layer is defined by
\begin{equation*}
\Omega_2(u) := \{ (x,z)\in D\times\mathbb{R}\ :\ -H < z < u(x)\}\,.
\end{equation*}
The dielectric permittivity is assumed to be a positive constant $\sigma_2>0$ in $\Omega_2(u)$ and differs in general from $\sigma_1(\cdot,-H)$, so that there is a jump discontinuity of the permittivity across the interface 
\begin{equation*}
\Sigma(u) := \{ (x,-H)\ : \ x\in D \;\text{ and }\; u(x)>-H\}\,,
\end{equation*}
separating $\Omega_1$ and $\Omega_2(u)$. Observe that $\Omega_2(u)$ is an open subset of $D\times (-H,\infty)$, which is connected when the coincidence set 
\begin{equation}
\mathcal{C}(u) := \{ x\in D\, :\, u(x)=-H\}\,. \label{coincidence}
\end{equation}
is empty, while it is disconnected and non-Lipschitz (if $u$ is smooth enough) otherwise. The two situations are depicted in Figure~\ref{Fig1} and Figure~\ref{Fig2}, respectively.  Independent of whether or not $\mathcal{C}(u)$ is empty, the domain
\begin{equation*}
\Omega(u) := \Omega_1 \cup \Sigma(u) \cup \Omega_2(u)\,, 
\end{equation*}
is Lipschitz (again if $u$ is smooth enough).

%%%%%%%%%%%%%%%%
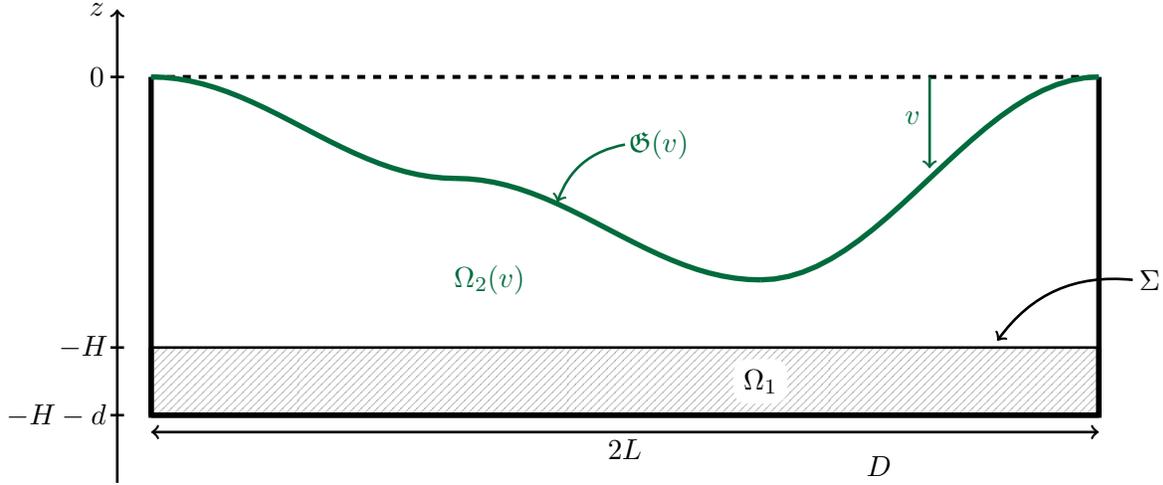
\begin{figure}
	\begin{tikzpicture}[scale=0.9]
	\draw[black, line width = 1.5pt, dashed] (-7,0)--(7,0);
	\draw[black, line width = 2pt] (-7,0)--(-7,-5);
	\draw[black, line width = 2pt] (7,-5)--(7,0);
	\draw[black, line width = 2pt] (-7,-5)--(7,-5);
	\draw[black, line width = 1pt] (-7,-4)--(7,-4);
	\draw[black, line width = 2pt, fill=gray, pattern = north east lines, fill opacity = 0.5] (-7,-4)--(-7,-5)--(7,-5)--(7,-4);
	\draw[cadmiumgreen, line width = 2pt] plot[domain=2:7] (\x,{-1.5-1.5*cos((pi*(\x-2)/5) r)});
	\draw[cadmiumgreen, line width = 2pt] plot[domain=-2.5:2] (\x,{-2.25-0.75*cos((pi*(2-\x)/4.5) r)});
	\draw[cadmiumgreen, line width = 2pt] plot[domain=-7:-2.5] (\x,{-0.75-0.75*cos((pi*(\x+2.5)/4.5) r)});
	\draw[cadmiumgreen, line width = 1pt, arrows=->] (4.5,0)--(4.5,-1.35);
	\node at (4.25,-0.6) {${\color{cadmiumgreen} v}$};
	\node[draw,rectangle,white,fill=white, rounded corners=5pt] at (2,-4.5) {$\Omega_1$};
	\node at (2,-4.5) {$\Omega_1$};
	\node at (-2,-3) {${\color{cadmiumgreen} \Omega_2(v)}$};
	\node at (3.75,-5.75) {$D$};
	\node at (7.75,-3) {$\Sigma$};
	\draw (7.5,-3) edge[->,bend right, line width = 1pt] (5.5,-3.9);
	\node at (-7.8,1) {$z$};
	\draw[black, line width = 1pt, arrows = ->] (-7.5,-6)--(-7.5,1);
	\node at (-8.4,-5) {$-H-d$};
	\draw[black, line width = 1pt] (-7.6,-5)--(-7.4,-5);
	\node at (-8,-4) {$-H$};
	\draw[black, line width = 1pt] (-7.6,-4)--(-7.4,-4);
	\node at (-7.8,0) {$0$};
	\draw[black, line width = 1pt] (-7.6,0)--(-7.4,0);
	\node at (0,-5.5) {$2L$};
	\draw[black, line width = 1pt, arrows = <->] (-7,-5.25)--(7,-5.25);
	\node at (0.5,-1) {${\color{cadmiumgreen} \mathfrak{G}(v)}$};
	\draw (0,-1) edge[->,bend right,cadmiumgreen, line width = 1pt] (-1,-1.85);
	\end{tikzpicture}
	\caption{Geometry of $\Omega(u)$ for a state $u=v$ with empty coincidence set (\textcolor{cadmiumgreen}{green}).}\label{Fig1}
\end{figure}
%%%%%%%%%%%%%%%%

%%%%%%%%%%%%%%%%
\begin{figure}
	\begin{tikzpicture}[scale=0.9]
	\draw[black, line width = 1.5pt, dashed] (-7,0)--(7,0);
	\draw[black, line width = 2pt] (-7,0)--(-7,-5);
	\draw[black, line width = 2pt] (7,-5)--(7,0);
	\draw[black, line width = 2pt] (-7,-5)--(7,-5);
	\draw[black, line width = 1pt] (-7,-4)--(7,-4);
	\draw[black, line width = 2pt, fill=gray, pattern = north east lines, fill opacity = 0.5] (-7,-4)--(-7,-5)--(7,-5)--(7,-4);
	\draw[blue, line width = 2pt] plot[domain=3:7] (\x,{-2-2*cos((pi*(3-\x)/4) r)});
	\draw[blue, line width = 2pt] (-1,-4)--(3,-4);
	\draw[blue, line width = 2pt] plot[domain=-7:-1] (\x,{-2-2*cos((pi*(1+\x)/6) r)});
	\draw[blue, line width = 1pt, arrows=->] (5,0)--(5,-1.85);
	\draw[blue, dashed, line width = 2pt] (-7,-4)--(-1,-4);
	\draw[blue, dashed, line width = 2pt] (3,-4)--(7,-4);
	\node at (4.7,-1) {${\color{blue} w}$};
	\node[draw,rectangle,white,fill=white, rounded corners=5pt] at (2,-4.5) {$\Omega_1$};
	\node at (2,-4.5) {$\Omega_1$};
	\node at (-5.75,-1.5) {${\color{blue} \Omega_2(w)}$};
	\node at (6,-2.5) {${\color{blue} \Omega_2(w)}$};
	\node at (3.75,-5.75) {$D$};
	\node at (8.05,-3) {${\color{blue} \Sigma(w)}$};
	\draw (7.5,-3) edge[->,bend right, blue, line width = 1pt] (5.5,-3.9);
	\node at (-4.5,-3) {${\color{blue} \Sigma(w)}$};
	\draw (-5.05,-3) edge[->,bend right, blue, line width = 1pt] (-6,-3.9);
	\node at (-7.8,1) {$z$};
	\draw[black, line width = 1pt, arrows = ->] (-7.5,-6)--(-7.5,1);
	\node at (-8.4,-5) {$-H-d$};
	\draw[black, line width = 1pt] (-7.6,-5)--(-7.4,-5);
	\node at (-8,-4) {$-H$};
	\draw[black, line width = 1pt] (-7.6,-4)--(-7.4,-4);
	\node at (-7.8,0) {$0$};
	\draw[black, line width = 1pt] (-7.6,0)--(-7.4,0);
	\node at (0,-5.5) {$2L$};
	\draw[black, line width = 1pt, arrows = <->] (-7,-5.25)--(7,-5.25);
	\node at (2,-3) {${\color{blue} \mathcal{C}(w)}$};
	\draw (1.45,-3) edge[->,bend right,blue, line width = 1pt] (0.5,-3.95);
	\node at (-2.6,-1) {${\color{blue} \mathfrak{G}(w)}$};
	\draw (-3.2,-1) edge[->,bend right,blue, line width = 1pt] (-4.0,-1.9);
	\end{tikzpicture}
	\caption{Geometry of $\Omega(u)$ for a state $u=w$ with non-empty coincidence set (\textcolor{blue}{blue}).}\label{Fig2}
\end{figure}
%%%%%%%%%%%%%%%%

The total energy $E(u)$ of the MEMS device is given by 
\begin{equation}
E(u) = E_m(u) + E_c(u) + E_e(u)\,, \label{ent}
\end{equation}
where 
\begin{itemize}
	\item [--] the mechanical energy is
	\begin{equation*}
	E_m(u):=\frac{\beta}{2} \|\partial_x^2u\|_{2}^2 +\left( \frac{\tau}{2} + \frac{a}{4}\|\partial_x u\|_{2}^2 \right) \|\partial_x u\|_{2}^2\,,
	\end{equation*}
	including contributions from bending ($\beta>0$), stretching due to axial tension ($\tau>0$), and self-stretching due to elongation ($a>0$). Here, $\|\cdot \|_2$ denotes the norm in $L_2(D)$;
	\item [--] the contact energy is
	\begin{equation*}
	E_c(u) := \int_D \mathbb{I}_{[-H,\infty)}(u)\,\rd x\,,
	\end{equation*}
	where $\mathbb{I}_{[-H,\infty)}$ denotes the indicator function of the interval $[-H,\infty)$;
	\item [--] the electrostatic energy is
	\begin{equation*}
	E_e(u) := -\frac{1}{2} \int_{\Omega(u)} \sigma \vert\nabla \psi_u\vert^2\,\rd (x,z)
	\end{equation*}
	with $\psi_u\in H^1(\Omega(u))$ denoting the electrostatic potential given as the variational solution to the transmission problem
	\begin{subequations}\label{TMP}
	\begin{align}
	\mathrm{div}(\sigma\nabla\psi_u)&=0 \quad\text{in }\ \Omega(u)\,, \label{TMP1}\\
	\llbracket \psi_u \rrbracket = \llbracket \sigma \partial_z \psi_u \rrbracket &=0\quad\text{on }\ \Sigma(u)\,, \label{TMP2}\\
	\psi_u&=h_u\quad\text{on }\ \partial\Omega(u)\,, \label{TMP3}
	\end{align} 
	with $\sigma =\sigma_1$ in $\Omega_1$ and $\sigma=\sigma_2$ in $\Omega_2(u)$. In \eqref{TMP2}, $\llbracket \cdot \rrbracket$ denotes the jump across the interface $\Sigma(u)$, while \eqref{TMP3} indicates that $\psi_u$ satisfies non-homogeneous Dirichlet boundary conditions prescribed by a given function $h_u$ with an explicit dependence upon the deformation $u$, see \eqref{bobbybrown} below. The latter is such that $h_u\equiv 0$ along the bottom plate $D\times \{-H-d\}$ and $h_u\equiv V$ along the elastic top plate $\mathfrak{G}(u)$ with positive potential value $V>0$ (see assumption~\eqref{bobbybrown1} below).
\end{subequations}
\end{itemize}
The modeling assumption is then that the evolution of $u$ is governed (at least formally) by the gradient flow associated with $E$, which reads
\begin{equation}
\partial_t u+\partial_u E(u)=0\,,\quad t>0\,, \quad u(0)=u_0\,, \label{GF}
\end{equation}
supplemented with clamped boundary conditions; that is,
\begin{equation*}
u(t) \in H_D^2(D):= \left\{ v\in H^2(D) \ :\ v(\pm L) = \partial_x v(\pm L) = 0 \right\}\,, \qquad t\ge 0\,.
\end{equation*}
As already noted in \cite[Section~5]{LW19}, where we studied the existence of minimizers of $E$, the interpretation of equation~\eqref{GF} needs some care for several reasons: 

First, the contact energy involves a non-smooth convex function and it is rather the notion of subdifferential which is appropriate and requires a suitable functional setting. Specifically, we define
\begin{equation*}
S_0 := \left\{ v\in H_D^2(D) \ :\ v>-H \;\text{ in }\; D \right\}\,, \qquad \bar{S}_0 := \left\{ v\in H_D^2(D) \ :\ v\ge -H \;\text{ in }\; D \right\}\,,
\end{equation*}
and recall that $\bar{S}_0$ is a closed convex set in $H_D^2(D)$. The ``derivative'' of $E_c$ with respect to $u$ is then given by the subdifferential $\partial\mathbb{I}_{\bar{S}_0}(u)$ of the indicator function $\mathbb{I}_{\bar{S}_0}$ of the set $\bar{S}_0$. It is a subset of the dual space
\begin{equation*}
H^{-2}(D):=\big(H_D^2(D)\big)'
\end{equation*}
of $H_D^2(D)$ and, for $v\in \bar{S}_0$, it is given by:
\begin{equation*}
\xi \in \partial\mathbb{I}_{\bar{S}_0}(v) \iff \langle \xi , w-v \rangle_{H_D^2} \le 0\,, \qquad w\in \bar{S}_0\,,
\end{equation*}
where $\langle \cdot , \cdot \rangle_{H_D^2}$ denotes the duality pairing between $H^{-2}(D)$ and $H_D^2(D)$. 

Second, the electrostatic energy $E_e(u)$ depends on $u$ not only through the integral over $\Omega(u)$ but also through the solution $\psi_u$ to the transmission problem \eqref{TMP}. Its differentiability is then a tricky and by no means obvious issue but can be handled with the help of shape optimization tools. It follows from the analysis performed in \cite{LW19} that the functional $E_e$ at $u\in \bar{S}_0$ has a directional derivative $g(u)$ given by
\begin{equation}\label{gg}
g(u)(x):=\left\{
\begin{array}{ll}  \displaystyle{\frac{\sigma_2}{2} \big(1+(\partial_x u(x))^2\big) \big(\partial_z\psi_{u,2}(x,u(x))\big)^2} \,, & x\in D\setminus \mathcal{C}(u)\,,\\
\hphantom{x}\vspace{-3.5mm}\\
\displaystyle{\frac{\sigma_1(x,-H)^2}{2\sigma_2}\big(\partial_z\psi_{u,1}(x,-H)\big)^2}\,, & x\in  \mathcal{C}(u)\,,
\end{array}\right.
\end{equation}
where $\psi_{u,1}:= \psi_u|_{\Omega_1}$ and $\psi_{u,1}:= \psi_u|_{\Omega_2(u)}$. In fact, $g(u)$ corresponds to the electrostatic force acting on the elastic plate. It is worth emphasizing here that the derivation of this result owes much to the book by Henrot \& Pierre \cite{HP18} (and its french version \cite{HP05}), which has been a constant source of inspiration in our studies of differentiability properties of the electrostatic energy involved in the modeling of MEMS. In fact, for $u\in S_0$, the coincidence set $\mathcal{C}(u)$ defined in \eqref{coincidence} is empty and the functional $E_e$ is actually Fr\'echet differentiable  at $u$, a feature which is proved along the lines of \cite[Sections~5.3.3-5.3.4]{HP18}, see \cite[Proposition~4.2]{LW19}. The formula \eqref{gg} for $g(u)$ then reduces to its first line. The situation is strikingly different when the coincidence set $\mathcal{C}(u)$ is non-empty, which corresponds to $u\in \bar{S}_0\setminus S_0$. In that situation, the trace of the solution $\psi_u$ to \eqref{TMP} at a point $(x,-H)$, $x\in D$, is given either by the transmission condition \eqref{TMP2} (if $x\in D\setminus\mathcal{C}(u)$) or by the Dirichlet boundary condition \eqref{TMP3} ( if $x\in\mathcal{C}(u)$) and both cases may alternate infinitely often while $x$ ranges in $D$. Identifying the derivative of $E_e$ at such a function $u$ requires a rather delicate analysis, see \cite[Corollary~4.3]{LW19} and Lemma~\ref{L3} below for a precise statement. Let us finally point out that the functional $g(u)$ involves the traces of $\partial_z \psi_u$ on $\mathfrak{G}(u)$ and $\mathcal{C}(u)\times\{-H\}$, which are well-defined only if $\psi_u$ is sufficiently regular. However, since $\Omega(u)$ is only a Lipschitz domain while $\Omega_2(u)$ might be even non-Lipschitz when $\mathcal{C}(u)\ne\emptyset$, the $H^2(\Omega_1)$-regularity of $\psi_{u,1}$ and the $H^2(\Omega_2(u))$-regularity of $\psi_{u,2}$ are not straightforward and a large part of the analysis performed in \cite{LW19} is devoted to this regularity issue. 

Since the computation of the derivative of the mechanical energy with respect to $u$ is classical, collecting the outcome of the above discussion yields the following parabolic variational inequality for $u$:
\begin{subequations} \label{evoleq}
\begin{equation}
\partial_t u + \beta\partial_x^4 u - (\tau+a\|\partial_x u\|_{2}^2) \partial_x^2 u + \partial\mathbb{I}_{\bar S_0}(u) \ni -g(u) \;\text{ in }\; (0,\infty)\times D\,, \label{evoleq1} 
\end{equation}
supplemented with the constraint
\begin{equation}
u(t)\in \bar S_0\,, \qquad t\ge 0\,, \label{evoleq2}
\end{equation} 
and the initial condition
\begin{equation}
u(0)=u_0\,, \qquad x\in D\,. \label{evoleq3}
\end{equation} 
\end{subequations} 
Assuming $\beta>0$, we note that \eqref{evoleq1} is a fourth-order parabolic variational inequality and the main purpose of this paper is to investigate the existence of weak solutions to \eqref{evoleq} for a suitable class of boundary data $h_u$ occurring in \eqref{TMP3}, see \eqref{bobbybrown} below. In the following, we interpret $\partial_x^4 v$ for $v\in \bar{S}_0$ as an element of $H^{-2}(D)$ by virtue of
$$
\langle \partial_x^4 v,\phi\rangle_{H_D^2}:=\int_D \partial_x^2 v \partial_x^2\phi\,\rd x\,,\quad \phi\in H_D^2(D)\,.
$$
A definition of a weak solution to \eqref{evoleq} is then as follows.

%%%%%%%%%%%%%%%%
\begin{definition}\label{D1}
Let $\beta>0$ and $u_0\in \bar{S}_0$. A weak solution to \eqref{evoleq} is a function 
\begin{equation*}
u\in C([0,\infty), H^1(D))\cap L_{\infty,loc}([0,\infty), H_D^2(D))\cap H_{loc}^1([0,\infty), L_2(D))
\end{equation*}
satisfying \eqref{evoleq2}, \eqref{evoleq3}, the weak formulation
\begin{equation*}
\begin{split}
\int_D \big(  u(t)-u_0\big) v\,\rd x&= - \int_0^t\int_D\left\{g\big(u(s) \big) -(\tau+a\|\partial_x u(s) \|_{2}^2)\partial_x^2 u(s) \right\} v\, \rd x\rd s\\
&\quad\, - \int_0^t\int_D \beta\partial_x^2 u(s) \partial_x^2 v\,\rd x\rd s -\int_0^t\,\langle \zeta(s) ,v\rangle_{H_D^2(D)}\,\rd s
\end{split}
\end{equation*}
for any $t>0$ and $v\in H_D^2(D)$, where 
\begin{equation}
\zeta := -\partial_t u-\beta\partial_x^4 u+\big(\tau+a\|\partial_x u\|_{2}^2\big)\partial_x^2 u - g(u) \in L_{2,loc}([0,\infty),H^{-2}(D)) \label{zeta}
\end{equation} 
with
\begin{equation}\label{constraint}
\zeta(t)\in \partial\mathbb{I}_{\bar S_0} \big(u(t)\big)\ \text{ for a.a. }\ t\ge 0\,,
\end{equation}
and the energy inequality
\begin{equation}\label{400}
\begin{split}
\frac{1}{2}\int_0^t \| \partial_t u(s)\|_{2}^2\,\rd s +E\big(u(t)\big) &\le E\big(u_0\big)\,,\quad t>0 \,.
\end{split}
\end{equation}
\end{definition}
%%%%%%%%%%%%%%%%

The main result of this paper is then the following existence result.

%%%%%%%%%%%%%%%%
\begin{theorem}\label{T1}
Let $\beta>0$ and $\tau, a\ge 0$. Suppose that the functions $\sigma$ and $h$ satisfy, respectively, \eqref{S} and \eqref{bobbybrown} below. Then, given $u_0\in \bar{S}_0$, there is a weak solution to \eqref{evoleq} in the sense of Definition~\ref{D1}.
\end{theorem}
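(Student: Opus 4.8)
The plan is to construct a weak solution via the minimizing movement (implicit time discretization / JKO) scheme associated with the gradient flow structure of $E$, exploiting the fact that $E$, although non-smooth and non-convex, is bounded below and that its non-smoothness comes only from the convex indicator $\mathbb{I}_{\bar S_0}$, while the electrostatic part $E_e$ has the directional derivative $g(u)$ identified in \eqref{gg} (and Lemma~\ref{L3} below). Fix a time step $\Delta t = T/N>0$ and, starting from $u_0$, define inductively $u_{k+1}$ as a minimizer over $\bar S_0$ of
\begin{equation*}
v\longmapsto \frac{1}{2\Delta t}\|v-u_k\|_2^2 + E(v)\,.
\end{equation*}
Existence of such a minimizer in the closed convex set $\bar S_0\subset H_D^2(D)$ follows exactly as in \cite{LW19} (coercivity of $E_m$ on $H_D^2(D)$, weak lower semicontinuity of $E_m$, continuity of $E_e$ with respect to $H^1$-convergence, and weak closedness of $\bar S_0$). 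Writing the Euler–Lagrange inequality for this minimization problem yields, for the piecewise constant interpolant, a discrete version of \eqref{evoleq1}: there is $\zeta_{k+1}\in\partial\mathbb{I}_{\bar S_0}(u_{k+1})$ with
\begin{equation*}
\frac{u_{k+1}-u_k}{\Delta t} + \beta\partial_x^4 u_{k+1} - (\tau+a\|\partial_x u_{k+1}\|_2^2)\partial_x^2 u_{k+1} + g(u_{k+1}) + \zeta_{k+1} = 0\quad\text{in }H^{-2}(D)\,,
\end{equation*}
where the term $g(u_{k+1})$ arises from the directional derivative of $E_e$; here one must be slightly careful and only test with directions $v-u_{k+1}$ pointing into $\bar S_0$, so that on the coincidence set the inequality sign is consistent with the definition of $\partial\mathbb{I}_{\bar S_0}$.

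\textbf{A priori estimates and passage to the limit.}
Next I would derive the standard discrete energy estimate: testing the Euler–Lagrange relation with $u_{k+1}-u_k$ and using convexity of $\mathbb{I}_{\bar S_0}$ gives
\begin{equation*}
\frac{1}{2\Delta t}\|u_{k+1}-u_k\|_2^2 + E(u_{k+1})\le E(u_k)\,,
\end{equation*}
so that $E(u_k)\le E(u_0)$ for all $k$ and $\sum_k \frac{1}{\Delta t}\|u_{k+1}-u_k\|_2^2\le 2E(u_0)-2\inf E<\infty$. Since $E_m$ is coercive on $H_D^2(D)$ and $E_e$ is bounded on energy sublevel sets (by the a priori bounds on $\psi_u$ from \cite{LW19}), this yields uniform bounds for the piecewise constant interpolant $u_{\Delta t}$ in $L_\infty(0,T;H_D^2(D))$ and for the piecewise affine interpolant $\hat u_{\Delta t}$ in $H^1(0,T;L_2(D))$, hence—by interpolation and Aubin–Lions—compactness in $C([0,T];H^1(D))$ and pointwise a.e. convergence of $u_{\Delta t}(t)$ in $H^1(D)$, along a subsequence, to some $u$. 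The limit $u(t)$ lies in $\bar S_0$ because $\bar S_0$ is closed in $H^1(D)$, giving \eqref{evoleq2}, and $u(0)=u_0$ gives \eqref{evoleq3}. Passing to the limit in the linear terms and in the $\|\partial_x u\|_2^2$ factor is routine; lower semicontinuity of $E_m$ and convergence of $E_e$ give the energy inequality \eqref{400} after the standard lower-semicontinuity argument on $\frac12\int_0^t\|\partial_t u\|_2^2$.

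\textbf{The main obstacles.}
The hard part will be twofold. First, one must pass to the limit in the nonlinear, nonlocal term $g(u_{\Delta t})$: this requires showing that $g$ is continuous with respect to $H^1$-convergence on energy-bounded sequences, which in turn hinges on the continuous dependence of the potential $\psi_u$—and of the traces of $\partial_z\psi_{u,1}$ and $\partial_z\psi_{u,2}$ appearing in \eqref{gg}—on the deformation $u$, including the delicate case where the coincidence set $\mathcal{C}(u)$ is non-empty and $\Omega_2(u)$ non-Lipschitz; this is precisely the content of the regularity and stability analysis of \cite{LW19} and Lemma~\ref{L3}, which I would invoke. One subtlety is that $g$ may only be weakly continuous or continuous in a weaker topology, in which case one passes to the limit testing against fixed smooth $v$ and uses the $H^1$-strong convergence of $u_{\Delta t}$ to control $g(u_{\Delta t})$ in, say, $L_1(D)$ or $L_2(D)$. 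Second, one must identify the limit of $\zeta_{\Delta t}$: from the equation, $\zeta_{\Delta t}$ is bounded in $L_2(0,T;H^{-2}(D))$ (it equals the sum of the other terms, each controlled), so it converges weakly to some $\zeta$ satisfying \eqref{zeta}; the inclusion \eqref{constraint}, i.e. $\zeta(t)\in\partial\mathbb{I}_{\bar S_0}(u(t))$ a.e., then follows from the maximal monotonicity of the subdifferential $\partial\mathbb{I}_{\bar S_0}$ together with the strong $H^1$- (indeed, one needs enough to pass to the limit in $\langle\zeta_{\Delta t},u_{\Delta t}\rangle$) convergence of $u_{\Delta t}$, via a standard Minty-type / $\liminf$ argument on $\int_0^T\langle\zeta_{\Delta t},u_{\Delta t}-w\rangle\,\mathrm{d}t\le 0$. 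Once all terms have been passed to the limit, the weak formulation of Definition~\ref{D1} is obtained and the proof is complete.
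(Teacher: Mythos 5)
Your overall strategy --- implicit Euler / minimizing movements for the gradient flow, discrete energy inequality, compactness of the interpolants, then identification of the limit including the subdifferential inclusion via a $\liminf$/Minty-type argument --- is the same as the paper's, and most of the outline would go through verbatim. The Minty-style identification of $\zeta$ is a legitimate alternative to the paper's direct passage to the limit in the discrete variational inequality; both exploit lower semicontinuity of $\int\phi\,\|\partial_x^2 u^{\delta}\|_2^2$.

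However, there is a genuine gap in the a priori estimates: you write $\sum_k \frac{1}{\Delta t}\|u_{k+1}-u_k\|_2^2\le 2E(u_0)-2\inf E<\infty$ and then assert that $E_e$ is bounded on energy sublevel sets, both of which presume $E$ is bounded below on $\bar S_0$. This is not established and need not be true: the electrostatic contribution only satisfies a bound of the form $-E_e(v)\le c\,(1+\|v\|_2^2)$ (this is Lemma~\ref{L1}, giving $E(v)\ge \frac{\beta}{4}\|\partial_x^2 v\|_2^2 - c_1(1+\|v\|_2^2)$), and since $\|v\|_2\le |D|^2\|\partial_x^2 v\|_2$ on $H_D^2(D)$ the negative term $-c_1\|v\|_2^2$ can in principle dominate $\frac{\beta}{4}\|\partial_x^2 v\|_2^2$ if $c_1|D|^4>\beta/4$; nothing in the hypotheses rules this out. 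Consequently neither $\inf E>-\infty$ nor boundedness of sublevel sets is available, and your dissipation estimate is unjustified as stated. The paper resolves this by first proving that the per-step functional $\mathcal{F}(v)=\frac{1}{2\delta}\|v-f\|_2^2+E(v)$ is bounded below for $\delta$ small (the quadratic penalization absorbs $-c_1\|v\|_2^2$), and then deriving an explicit $L_2$-bound on the iterates via a discrete Gronwall argument (giving $\|u_n^\delta\|_2^2\le C e^{c n\delta}$), which is then fed back through Lemma~\ref{L1} to obtain $H^2$ and dissipation bounds that grow exponentially in time. Your proof needs this extra step; without it, the compactness of $u_{\Delta t}$ and the passage to the limit do not follow. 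Once this is fixed, the remaining steps of your outline match the paper's proof.
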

%%%%%%%%%%%%%%%%

Owing to the variational structure \eqref{GF} of \eqref{evoleq}, the proof of Theorem~\ref{T1} is performed with the help of a time implicit Euler scheme. Given a time step $\delta>0$, we construct by induction a sequence $(u_n^\delta)_{n\ge 0}$ such that $u_0^\delta := u_0$ and, for $n\ge 0$, $u_{n+1}^\delta$ is a minimizer of the auxiliary functional
\begin{equation*}
F_{n}^{\delta}(v) := \frac{1}{2\delta} \|v-u_n^\delta\|^2 + E(v)\,, \qquad v\in \bar{S}_0\,.
\end{equation*}
Since $F_{n}^\delta$ includes a negative contribution from the electrostatic energy $E_e$, we begin the proof by showing that $F_n^\delta$ is bounded below, provided $\delta\in (0,\delta_0)$ is sufficiently small, the smallness condition depending only on $D$, the parameters $H$, $d$,$\beta$, $\tau$, $a$, the permittivity $\sigma$, and the function $h$ defining the boundary data in \eqref{TMP3}. The existence of a minimizer $u_{n+1}^\delta$ of $F_n^\delta$ on $\bar{S}_0$ then relies on the lower semicontinuity of the convex part $E_m+E_c$ of the energy and the properties of the electrostatic energy $E_e$ established in \cite{LW19}. As a consequence of $u_{n+1}^\delta$ being a minimizer of $F_n^\delta$ on $\bar{S}_0$, we further derive a handful of estimates on $(u_n^\delta)_{n\ge 1}$, which allows us to show that the family $(u^\delta)_{\delta\in (0,\delta_0)}$ of piecewise constant functions in time defined by
\begin{equation*}
u^\delta(t) := \sum_{n=0}^\infty u_n^\delta \mathbf{1}_{[n\delta,(n+1)\delta)}(t)\,, \qquad t\ge 0\,,
\end{equation*}
has the right compactness properties, so that its cluster points as $\delta\to 0$ are weak solutions to \eqref{evoleq} in the sense of Definition~\ref{D1}. Here again, a key ingredient in the proof is the continuity of the functional $g$ defined in \eqref{gg}, which we established in \cite{LW19}, see Lemma~\ref{L2} below. 

Finally, we address the regularity of the distribution $\zeta\in \partial\mathbb{I}_{\bar{S}_0}(u)$ associated with a weak solution $u$ to \eqref{evoleq} and given by \eqref{zeta}-\eqref{constraint}. As already mentioned, since $\beta>0$, equation~\eqref{evoleq} is a fourth-order parabolic variational inequality and, as such, the regularity of $\zeta(t)$ stemming from \eqref{constraint} is that it is a distribution in $H^{-2}(D)$ for a.e. $t>0$. In fact, since the seminal work \cite{Br72}, regularity for the obstacle problem for the biharmonic parabolic equation has received less attention than the same issue for the obstacle problem for second-order parabolic equations. The only regularity result regarding the obstacle problem for the biharmonic parabolic equation we are aware of is \cite{NO2015}, whereas \cite{CF1979, Fr1973, PL2008, Sc1986}  are devoted to the elliptic analogue. As in \cite{NO2015}, we can prove that $-\zeta(t)$ is actually a non-negative bounded Radon measure on $D$ for a.e. $t>0$.

%%%%%%%%%%%%%%%%
\begin{corollary}\label{C2}
Let the assumptions of Theorem~\ref{T1} be satisfied. If $u$ is a weak solution to \eqref{evoleq} in the sense of Definition~\ref{D1}, then $u\in L_{2,loc}([0,\infty),H^s(D))$ for $s\in (2,7/2)$ and $-\zeta\in L_{2,loc}([0,\infty),\mathcal{M}_+(D))$, where $\mathcal{M}_+(D)$ is the positive cone of the space $\mathcal{M}(D)=C_0(D)'$ of bounded Radon measures on $D$.
\end{corollary}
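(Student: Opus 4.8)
The plan is to improve the regularity of a weak solution $u$ in two stages: first upgrade the spatial regularity of $u(t)$ beyond $H_D^2(D)$ by a bootstrap using the equation, and then exploit this extra regularity to test the variational inequality against suitable functions and conclude that $-\zeta(t)$ acts as a non-negative Radon measure. For the first stage, I would start from the information already available: $u\in L_{\infty,loc}([0,\infty),H_D^2(D))\cap H_{loc}^1([0,\infty),L_2(D))$ and, by \eqref{zeta}, $\zeta\in L_{2,loc}([0,\infty),H^{-2}(D))$. Fix $T>0$ and work on $(0,T)$. Rewriting \eqref{evoleq1} as $\beta\partial_x^4 u(t) = -\partial_t u(t) + (\tau + a\|\partial_x u(t)\|_2^2)\partial_x^2 u(t) - g(u(t)) - \zeta(t)$, I observe that $-\partial_t u(t)\in L_2(D)$, the term $(\tau + a\|\partial_x u(t)\|_2^2)\partial_x^2 u(t)\in L_2(D)$ since $u(t)\in H^2(D)$ with a uniformly bounded coefficient, and $g(u(t))\in L_\infty(D)$ by the explicit formula \eqref{gg} together with the $H^2$-regularity of $\psi_{u,1},\psi_{u,2}$ and the trace estimates established in \cite{LW19} (this needs the continuity/boundedness of $g$ quoted as Lemma~\ref{L2} below, integrated in time). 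Hence, modulo $\zeta(t)\in H^{-2}(D)$, the right-hand side lies in $L_2(D)$; this already gives $\partial_x^4 u(t)\in H^{-2}(D)$, i.e. nothing new by itself, so the extra regularity must come from the sign structure of $\zeta$.

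The key observation — exactly as in \cite{NO2015} — is that \eqref{constraint} forces $-\zeta(t)$ to be a non-negative distribution: for $v\in \bar S_0$ with $v\ge u(t)$ pointwise we have $v - u(t)\ge 0$ and $\langle \zeta(t), v - u(t)\rangle_{H_D^2}\le 0$, so testing against non-negative $\phi\in C_c^\infty(D)$ (note $u(t)+\phi\in\bar S_0$) yields $\langle -\zeta(t),\phi\rangle_{H_D^2}\ge 0$. A non-negative distribution on $D$ is automatically a non-negative Radon measure (Riesz representation), so $-\zeta(t)\in\mathcal{M}_+(D)$ for a.e. $t$. What remains is to control its total mass in $L_{2,loc}$ in time: taking $\phi\equiv 1$ near a fixed compact exhaustion, or more precisely using a cutoff and the duality pairing, one bounds $\|\zeta(t)\|_{\mathcal{M}(D)}$ by the $H^{-2}$-norms of the other four terms in \eqref{zeta} tested against a fixed smooth function, hence by a constant times $\|\partial_t u(t)\|_2 + \|\partial_x^2 u(t)\|_2 + \|g(u(t))\|_\infty + (\text{lower order})$, which is in $L_2(0,T)$ by the energy inequality \eqref{400} and the regularity class of $u$. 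This gives $-\zeta\in L_{2,loc}([0,\infty),\mathcal{M}_+(D))$.

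For the claim $u\in L_{2,loc}([0,\infty),H^s(D))$ with $s\in(2,7/2)$, I would feed the measure regularity of $\zeta$ back into the elliptic equation $\beta\partial_x^4 u(t) = f(t) - \zeta(t)$ with $f(t)\in L_2(D)$ uniformly and $\zeta(t)\in\mathcal{M}(D)\hookrightarrow W^{-1/2-\epsilon}_2(D)$ for every $\epsilon>0$ (in one space dimension, bounded measures embed into $H^{-1/2-\epsilon}(D)$ by Sobolev embedding duality, since $H^{1/2+\epsilon}(D)\hookrightarrow C(\bar D)$). Thus the right-hand side lies in $H^{-1/2-\epsilon}(D)$, and elliptic regularity for $\partial_x^4$ with clamped boundary conditions on the interval gives $u(t)\in H^{7/2-\epsilon}(D)$ with the corresponding estimate; taking $L_2$ in time over $(0,T)$ and using that the $H^{-1/2-\epsilon}$-bound on $\zeta(t)$ is $L_2$ in time yields $u\in L_2((0,T),H^{s}(D))$ for every $s<7/2$. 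Since $T$ is arbitrary, the local-in-time statement follows.

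The main obstacle I anticipate is the first step for $g$: establishing that $g(u(t))\in L_\infty(D)$ with an $L_2$-in-time bound on its norm genuinely requires the fine regularity theory for the transmission problem \eqref{TMP} — the $H^2(\Omega_1)$- and $H^2(\Omega_2(u))$-regularity of $\psi_u$ despite $\Omega_2(u)$ being possibly non-Lipschitz when $\mathcal{C}(u)\ne\emptyset$ — which is precisely the content of \cite{LW19} and the continuity statement in Lemma~\ref{L2}. Assuming that machinery, the rest is a clean bootstrap. A secondary technical point is the boundary regularity for $\partial_x^4$ on $(-L,L)$ with clamped conditions up to the threshold exponent $7/2$; this is standard one-dimensional elliptic regularity, and the open endpoint $7/2$ reflects only the embedding $\mathcal{M}(D)\hookrightarrow H^{-1/2-\epsilon}(D)$ being false at $\epsilon=0$.
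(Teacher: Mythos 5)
Your plan follows the paper's approach closely: identify $-\zeta(t)$ as a non-negative distribution, hence a Radon measure; control its total mass in $L_2$-in-time; then bootstrap via the embedding $\mathcal{M}(D)\hookrightarrow H^{s-4}_D(D)$ and elliptic regularity to reach $H^s(D)$ for $s<7/2$. The non-negativity step, the embedding and the final bootstrap are all correct and match the paper (the paper cites \cite[Lemma~4.1~(iii)]{AQ2004} for the embedding where you invoke Sobolev duality in one dimension; both are fine, and the threshold $s=7/2$ has the same origin in both, namely $4-s>1/2$).

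There is, however, a genuine gap in the middle step, where you assert that ``taking $\phi\equiv 1$ near a fixed compact exhaustion, or more precisely using a cutoff and the duality pairing, one bounds $\|\zeta(t)\|_{\mathcal{M}(D)}$''. The obstruction is that $1\notin H_D^2(D)$ (it does not satisfy the clamped boundary conditions), and a fixed cutoff $\chi\in C_c^\infty(D)$ only yields a bound on $\|\chi\,\zeta(t)\|_{\mathcal{M}}$, not on $\|\zeta(t)\|_{\mathcal{M}}$. To upgrade this, you must know a priori that $\zeta(t)$ puts no mass near $\partial D$; otherwise the cutoff argument is silent there. The paper supplies exactly this: since $u(t)\in H_D^2(D)$ with $\|u(t)\|_{H^2}\le K_T$ for $t\in[0,T]$, a Taylor expansion at $x=\pm L$ using the clamped boundary conditions gives $u(t,x)\ge -K_T(L-|x|)^{3/2}\ge -H/2$ for $|x|\ge x_T$, where $x_T<L$ depends only on $T$ via $K_T$. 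Then for any $v\in C_c^\infty(D)$ supported in $\{|x|\ge x_T\}$ and small $\theta>0$, both $u(t)+\theta v$ and $u(t)-\theta v$ lie in $\bar S_0$, which forces $\langle\zeta(t),v\rangle_{H_D^2}=0$ and hence $\mathrm{supp}\,\zeta(t)\subset[-x_T,x_T]$ for a.e.\ $t\in[0,T]$. Only with this in hand does the cutoff $\chi_T\equiv 1$ on $[-x_T,x_T]$ give $\|-\zeta(t)\|_{\mathcal{M}}\le\langle-\zeta(t),\chi_T\rangle_{H_D^2}$, and the $L_2$-in-time bound follows from $\zeta\in L_2((0,T),H^{-2}(D))$. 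Your side-discussion of $g(u(t))\in L_\infty(D)$ is not actually needed for this: the relevant input is simply $\zeta\in L_{2,loc}([0,\infty),H^{-2}(D))$, which is already part of Definition~\ref{D1}.
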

%%%%%%%%%%%%%%%%

Let us finally describe the contents of this paper: in the next section, we state the assumptions on the permittivity $\sigma$ and the boundary data  $h_u$ in \eqref{TMP3}. In Section~\ref{S3}, we recall the well-posedness of the transmission problem \eqref{TMP} and the regularity of its solution established in \cite{LW19}, along with the properties of $E_e$ and $g$ from \cite{LW19} which are needed for the analysis performed below. We also show in this section the existence of a minimizer $u_{n+1}^\delta$ of the functional $F_n^\delta$ on $\bar{S}_0$ for sufficiently small values of the time step $\delta>0$. After this preparation, we are in a position to prove Theorem~\ref{T1} in Section~\ref{S4} and Corollary~\ref{C2} in Section~\ref{S5}.

%%%%%%%%%%%%%%%%
%%%%%%%%%%%%%%%%
\section{Assumptions}\label{S2}
%%%%%%%%%%%%%%%%
%%%%%%%%%%%%%%%%

We provide now the detailed assumptions we put on the permittivity $\sigma$ and the boundary data $h_u$ occurring in the transmission problem~\eqref{TMP}. As already mentioned, the dielectric properties of the device are accounted for by the permittivity $\sigma$, which is defined by
\begin{equation*}
\sigma(x,z) := \left\{
\begin{array}{lcl}
\sigma_1(x,z) & \text{ for } & (x,z)\in \Omega_1\,, \\
& & \vspace{-4mm}\\
\sigma_2 & \text{ for } & (x,z)\in D\times (-H,\infty)\,,
\end{array}
\right.
\end{equation*}
where 
\begin{equation}\label{S}
\sigma_1\in C^2\big( \overline{\Omega}_1 \big) \ \text{with }\ \sigma_1>0 \text{ in $\overline{\Omega}_1$}\,,\qquad \sigma_2\in (0,\infty)\,.
\end{equation}
In particular, there are $0<\sigma_{min} < \sigma_{max}$ such that
\begin{equation}
\sigma_{min} \le \sigma(x,z) \le \sigma_{max}\,, \qquad (x,z)\in \bar D\times [-H,\infty)\,. \label{sigma}
\end{equation}
We fix $C^2$-functions
\begin{subequations}\label{bobbybrown}
\begin{equation}\label{bobbybrown2a}
h_1: \bar{D}\times [-H-d,-H]\times [-H,\infty)\rightarrow [0,\infty)
\end{equation}
and 
\begin{equation}\label{bobbybrown2aa}
h_2: \bar{D}\times [-H,\infty)\times [-H,\infty)\rightarrow [0,\infty)
\end{equation}
satisfying
\begin{align}
h_1(x,-H,w)&=h_2(x,-H,w)\,,\quad (x,w)\in D\times [-H,\infty)\,,\label{bobbybrown2}\\
 \sigma_1(x,-H)\partial_z h_1(x,-H,w)& =\sigma_2\partial_z h_2(x,-H,w)\,,\quad (x,w)\in D\times [-H,\infty)\,.\label{bobbybrown3}
\end{align}
Moreover, we assume that
\begin{equation}\label{bobbybrown1}
h_1(x,-H-d,w)=h_2(x,w,w)- V=0\,,\quad (x,w)\in \bar{D}\times [-H,\infty)\,,
\end{equation}
where $V>0$, and that there are constants $m_i>0$, $i=1,2,3$, such that 
\begin{equation}
\vert \partial_x h_1(x,z,w)\vert +\vert\partial_z h_1(x,z,w)\vert  \le \sqrt{m_1+m_2 w^2}\,, \quad \vert\partial_w h_1(x,z,w)\vert \le \sqrt{m_3}\,, \label{bobbybrown5}
\end{equation}
for $(x,z,w)\in \bar D \times [-H-d,-H] \times [-H,\infty)$ and
\begin{equation} 
\vert \partial_x h_2(x,z,w)\vert +\vert\partial_z h_2(x,z,w)\vert \le \sqrt{\frac{m_1+m_2 w^2}{H+w}}\,,\quad \vert\partial_w h_2(x,z,w)\vert \le \sqrt{\frac{m_3}{H+w}}\,,\label{bobbybrown6}
\end{equation}
\end{subequations}
for $(x,z,w)\in \bar D \times [-H,\infty) \times [-H,\infty)$.

A typical example for a function $h$ satisfying the assumptions \eqref{bobbybrown} above was given in \cite[Example~5.5]{LW19} which we recall now.

%%%%%%%%%%%%%%%%
\begin{example}\label{Ex1}
Let us consider the situation where $\sigma_1$ does not depend on the vertical variable $z$; that is, $\sigma_1=\sigma_1(x)$. In that case, we set
$$
h_1(x,z,w):=V\frac{\sigma_2 (H+z+d)}{\sigma_2 d+\sigma_1(x)(H+w)}\,,\quad (x,z,w)\in \bar{D}\times [-H-d,-H]\times [-H,\infty)\,,
$$
and
$$
h_2(x,z,w):=V\frac{\sigma_2 d+\sigma_1(x)(H+z)}{\sigma_2 d+\sigma_1(x)(H+w)}\,,\quad (x,z,w)\in \bar{D}\times [-H,\infty)\times [-H,\infty)\,.
$$
Then assumptions \eqref{bobbybrown} are easily checked. 
\end{example}
%%%%%%%%%%%%%%%%

For a given function $v\in \bar S_0$ we then define 
\begin{equation}\label{bb}
h_v(x,z):=\left\{\begin{array}{ll} h_{v,1}(x,z):= h_1(x,z,v(x))\,, & (x,z)\in \overline\Omega_1\,,\\
h_{v,2}(x,z):=h_2(x,z,v(x))\,, & (x,z)\in \bar{D}\times [-H,\infty)\,.
\end{array}\right.  
\end{equation}

Let us point out that assumption~\eqref{bobbybrown2}-\eqref{bobbybrown3} guarantee that $h_v$ defined in \eqref{bb} satisfies the transmission conditions \eqref{TMP2}, that is, 
$$
\llbracket h_v \rrbracket = \llbracket \sigma \partial_z h_v \rrbracket =0\quad\text{on}\ \Sigma(v)\,,
$$
while assumption \eqref{bobbybrown1} along with \eqref{TMP3} entails that the electrostatic potential $\psi_v$ equals zero on the bottom plate $D\times \{-H-d\}$ and equals $V$ along the elastic plate $\mathfrak{G}(u)$. Assumptions \eqref{bobbybrown5}-\eqref{bobbybrown6} are required to guarantee the coercivity of the  total energy $E(v)$.

Throughout the paper, $c$ and $(c_i)_{i\ge 1}$ denote positive constants depending only on $D$, $H$, $d$, $\beta$, $\tau$, $a$, $\sigma$, $(m_i)_{1\le i \le 3}$, and $u_0$. The dependence upon additional parameters will be indicated explicitly.

%%%%%%%%%%%%%%%%
%%%%%%%%%%%%%%%%
\section{Auxiliary Results}\label{S3}
%%%%%%%%%%%%%%%%
%%%%%%%%%%%%%%%%

We first recall some results that were derived in \cite{LW19} and begin with the well-posedness of \eqref{TMP}.

%%%%%%%%%%%%%%%%
\begin{lemma} \cite[Theorem~1.1]{LW19} \label{L6}
Suppose \eqref{bobbybrown}. For each $v\in \bar{S}_0$, there is a unique variational solution $\psi_v \in h_v + H^1_0(\Omega(v))$ to \eqref{TMP}.  Moreover, $\psi_{v,1}:= \psi_{v}\vert_{\Omega_1} \in H^2(\Omega_1)$, $\psi_{v,2} := \psi_{v}\vert_{\Omega_2(v)} \in H^2(\Omega_2(v))$, and $\psi_{v}$  is a strong solution to the transmission problem~\eqref{TMP} satisfying $\sigma\partial_z \psi_v\in H^1(\Omega(v))$.
\end{lemma}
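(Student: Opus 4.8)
\textbf{Proof plan for Lemma~\ref{L6}.}
The plan is to establish existence and uniqueness of the variational solution first, then bootstrap regularity, and finally interpret the strong formulation. For the well-posedness, I would work on the fixed Lipschitz domain $\Omega(v)$ and recast \eqref{TMP} as the minimization of the convex, coercive, weakly lower semicontinuous functional $\phi\mapsto \frac12\int_{\Omega(v)}\sigma|\nabla\phi|^2\,\rd(x,z)$ over the affine subspace $h_v+H^1_0(\Omega(v))$. Here one uses that $h_v$ built from \eqref{bb} via the $C^2$ extensions $h_1,h_2$ lies in $H^1(\Omega(v))$ (indeed in $H^2$ on each of $\Omega_1$ and $\Omega_2(v)$, with compatible traces on $\Sigma(v)$ thanks to \eqref{bobbybrown2}), so the admissible class is nonempty, and that the ellipticity bound \eqref{sigma} together with Poincaré's inequality on $\Omega(v)$ gives coercivity on $h_v+H^1_0(\Omega(v))$. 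Strict convexity of the Dirichlet functional yields the unique minimizer $\psi_v$; writing the Euler--Lagrange equation gives the weak/variational form of \eqref{TMP1}, and testing against $C_c^\infty$ functions supported on either side of $\Sigma(v)$ recovers $\mathrm{div}(\sigma\nabla\psi_v)=0$ in $\Omega_1$ and in $\Omega_2(v)$ separately, while the full variational identity encodes the flux transmission condition $\llbracket\sigma\partial_z\psi_v\rrbracket=0$ weakly on $\Sigma(v)$; $\llbracket\psi_v\rrbracket=0$ is automatic since $\psi_v\in H^1(\Omega(v))$ on the connected-through-$\Sigma(v)$ part.

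The regularity statements $\psi_{v,1}\in H^2(\Omega_1)$ and $\psi_{v,2}\in H^2(\Omega_2(v))$ are the substantive part. Since $\Omega_1=D\times(-H-d,-H)$ is a rectangle and $\sigma_1\in C^2(\overline\Omega_1)$ is bounded away from $0$, interior and up-to-the-boundary $H^2$ elliptic regularity applies on $\Omega_1$ once one knows the Dirichlet datum on $\partial\Omega_1$ is sufficiently regular: it is $h_{v,1}$ on the lateral and bottom sides (which is $C^2$ in its arguments and $H^2$ in $x$ because $v\in H^2_D(D)$), and on the top side $\Sigma$ the trace of $\psi_v$ coincides with the trace of $\psi_{v,2}$ from above, so the two regularity problems are coupled. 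On $\Omega_2(v)$ the geometry is worse: its top boundary is the graph $\mathfrak{G}(v)$ of an $H^2(D)$ function (a $C^{1,\alpha}$, indeed $W^2_2$, boundary after flattening), and when $\mathcal C(v)\neq\emptyset$ the domain $\Omega_2(v)$ degenerates and may fail to be Lipschitz, so one cannot quote a black-box $H^2$ theorem directly. The route is to transform $\Omega_2(v)$ onto a reference strip by the map $(x,z)\mapsto\big(x,(z+H)/(H+v(x))\big)$ (well defined where $v>-H$), which turns the equation into a uniformly elliptic equation with $H^1$ (in $x$) coefficients on a rectangle, and then combine this with a careful treatment near the coincidence set, plus the matching of traces on $\Sigma(v)$, to propagate $H^2$-regularity across the interface. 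This is precisely the analysis carried out in \cite{LW19}, and I would invoke it: the coefficient regularity coming from $\sigma_1\in C^2$, the datum regularity coming from \eqref{bobbybrown} and $v\in H^2_D(D)$, and the transmission conditions \eqref{bobbybrown2}--\eqref{bobbybrown3} are exactly what make the transformed problem amenable to $H^2$ estimates. The claim $\sigma\partial_z\psi_v\in H^1(\Omega(v))$ then follows: on each subdomain $\sigma\partial_z\psi_v\in H^1$ by the $H^2$-regularity and $\sigma_i\in C^2$, and the flux transmission condition \eqref{TMP2}, now satisfied in the sense of traces, glues these into a single $H^1(\Omega(v))$ function since its traces from both sides of $\Sigma(v)$ agree.

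Finally, knowing $\psi_{v,i}\in H^2$ on each piece, one integrates by parts in the variational identity to see that $-\mathrm{div}(\sigma\nabla\psi_v)=0$ holds a.e.\ in $\Omega_1$ and $\Omega_2(v)$, that $\psi_v=h_v$ on $\partial\Omega(v)$ in the trace sense, and that both transmission conditions in \eqref{TMP2} hold as identities of traces on $\Sigma(v)$; hence $\psi_v$ is a strong solution. The main obstacle is unquestionably the $H^2$-regularity on $\Omega_2(v)$ when the coincidence set is nonempty and the domain is non-Lipschitz; everything else (existence via direct method, uniqueness via strict convexity, the easy rectangle $\Omega_1$) is routine. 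Since the statement is quoted as \cite[Theorem~1.1]{LW19}, the proof here consists of recalling the variational setup and citing the delicate regularity analysis of \cite{LW19} for the hard step.
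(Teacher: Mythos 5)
The paper itself offers no proof of Lemma~\ref{L6}: it is recalled verbatim from \cite[Theorem~1.1]{LW19} with only a citation, so there is nothing in this paper against which to check your argument line by line. That said, your reconstruction is consistent with what the introduction of the present paper says about the analysis in \cite{LW19}: existence and uniqueness by the direct method on the affine space $h_v+H^1_0(\Omega(v))$ using \eqref{sigma} and the admissibility of $h_v$ from \eqref{bb}; the genuine difficulty being the $H^2$-regularity of $\psi_{v,2}$ on $\Omega_2(v)$, which degenerates to a non-Lipschitz domain when $\mathcal{C}(v)\neq\emptyset$; and the gluing $\sigma\partial_z\psi_v\in H^1(\Omega(v))$ via the flux transmission condition. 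You correctly identify this regularity step as the one that cannot be obtained from off-the-shelf elliptic theory and defer it to \cite{LW19}, which is exactly what the paper does. The specific flattening map you propose and the precise mechanism for handling the coincidence set are plausible but cannot be verified against this paper's text; since you ultimately invoke \cite{LW19} for that step, the sketch is acceptable as a recollection of the cited result rather than an independent proof.
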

%%%%%%%%%%%%%%%%

The regularity of $\psi_v$ stated in Lemma~\ref{L6} guarantees that $g(v)$ defined in \eqref{gg} is meaningful for $v\in \bar{S}_0$. We collect in the next result some properties of $g$ established in \cite{LW19}.
 
%%%%%%%%%%%%%%%%
\begin{lemma}\label{L2}
Suppose \eqref{bobbybrown}. 
\begin{itemize}
\item[{\bf (a)}] If $v\in \bar{S}_0$ and $(v_j)_{j\ge 1}\subset\bar{S}_0$ are such that $v_j\rightharpoonup v$ in $H^2(D)$, then 
\begin{equation*}
\lim_{j\to\infty} \|g(v_j)-g(v)\|_2 = 0 \quad\text{ and }\quad \lim_{j\to\infty} E_e(v_j) = E_e(v)\,.
\end{equation*}
\item[{\bf (b)}] The mapping $g:\bar{S}_0\rightarrow L_2(D)$ is continuous and bounded on bounded sets, the set $\bar{S}_0$ being endowed with the topology of $H^2(D)$.
\end{itemize}
\end{lemma}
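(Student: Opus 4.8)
\textbf{Proof proposal for Lemma~\ref{L2}.}

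The whole statement reduces to a single continuity property, since part~(b) follows from part~(a) by a standard subsequence argument (if $v_j\to v$ in $H^2(D)$ but $\|g(v_j)-g(v)\|_2\not\to0$, pass to a subsequence realizing the $\liminf$; since the $v_j$ are bounded in $H^2(D)$ we may extract a further weakly convergent subsequence, whose weak limit must be $v$ by uniqueness of limits, and then part~(a) gives the contradiction; boundedness of $g$ on bounded sets is handled similarly, or directly from the bound on $\|\partial_z\psi_v\|$ on the relevant traces coming from the $H^2$-estimates in Lemma~\ref{L6} together with \eqref{bobbybrown5}--\eqref{bobbybrown6}). So I would concentrate on part~(a): given $v_j\rightharpoonup v$ in $H^2(D)$, show $g(v_j)\to g(v)$ in $L_2(D)$ and $E_e(v_j)\to E_e(v)$.

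The plan is as follows. First, by compact embedding $H^2(D)\hookrightarrow C^1(\bar D)$, the convergence $v_j\rightharpoonup v$ in $H^2(D)$ upgrades to $v_j\to v$ and $\partial_x v_j\to\partial_x v$ uniformly on $\bar D$; in particular $v_j\to v$ in $C(\bar D)$, which controls the domains $\Omega_2(v_j)$ and the coincidence sets. The heart of the matter is then a convergence statement for the traces of $\partial_z\psi_{v_j}$ appearing in \eqref{gg}. I would transport the transmission problem \eqref{TMP} on the moving domain $\Omega(v_j)$ back to a fixed reference configuration — the natural choice being the ``cylinder'' $\Omega_1\cup\Sigma\cup(D\times(-H,0))$ or, following \cite{LW19}, a diffeomorphism flattening $\Omega_2(v)$ — so that $\psi_{v_j}$ becomes a solution $\Psi_j$ of a uniformly elliptic transmission problem on a fixed domain with coefficients depending (continuously, via the uniform $C^1$ convergence of $v_j$) on $v_j$, and with boundary data converging in the appropriate trace sense because $h$ is $C^2$ in all its arguments and $h_{v_j}\to h_v$. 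Uniform ellipticity holds thanks to \eqref{sigma} and the uniform lower bound $v_j>-H$ being lost in general — but here one must be careful, since the $v_j$ need not stay in $S_0$; this is precisely the subtle point handled in \cite{LW19}, where the pulled-back coefficients degenerate exactly over $\mathcal{C}(v)$ and the limit problem decouples into a genuine transmission part over $D\setminus\mathcal{C}(v)$ and a pure Dirichlet part over $\mathcal{C}(v)$, which is the reason for the two-line structure of \eqref{gg}.

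Granting the $H^2$-regularity and the associated a~priori estimates from Lemma~\ref{L6} (uniform in $j$, since they depend only on $\|v_j\|_{H^2}$, which is bounded, and on $\sigma$, $h$), I would extract a weakly convergent subsequence $\Psi_j\rightharpoonup\Psi$ in the pulled-back $H^2$-spaces; identify $\Psi$ as the pulled-back solution $\psi_v$ by passing to the limit in the (transported) weak formulation, using the coefficient convergence and uniqueness from Lemma~\ref{L6}; and then improve the weak $H^2$-convergence to strong convergence of the relevant traces of $\partial_z\Psi_j$ in $L_2$ — either by an energy argument (showing $\|\Psi_j\|\to\|\Psi\|$ in the $H^1$ or $H^2$ norm via the equation, hence weak~$+$~norm convergence gives strong convergence, then a continuous trace operator) or directly from the $H^2$-estimates combined with an Aubin--Lions type compactness on the traces. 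Plugging this into \eqref{gg}, together with the uniform convergence $\partial_x v_j\to\partial_x v$ and the pointwise convergence of the characteristic functions $\mathbf{1}_{D\setminus\mathcal{C}(v_j)}$ dictated by $v_j\to v$ in $C(\bar D)$, yields $g(v_j)\to g(v)$ in $L_2(D)$; and $E_e(v_j)\to E_e(v)$ follows from the same strong convergence of $\nabla\psi_{v_j}$ (transported) on the reference domain, since $E_e(v)=-\tfrac12\int\sigma|\nabla\psi_v|^2$ is a continuous quadratic functional of that gradient once everything lives on a fixed domain.

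The main obstacle is the absence of a uniform positive lower bound on $v_j+H$: when $v\in\bar S_0\setminus S_0$ the reference-domain coefficients degenerate on $\mathcal{C}(v)$, $\Omega_2(v)$ is non-Lipschitz, and one has to show both that the degenerate limit transmission problem is still well-posed and that no mass of $\partial_z\psi_{v_j}$ concentrates near $\mathcal{C}(v)$ in a way that would spoil the $L_2$-trace convergence — matching the two lines of \eqref{gg} at the interface. This is exactly the delicate analysis carried out in \cite{LW19} (\cite[Corollary~4.3]{LW19} and the regularity theory behind \cite[Theorem~1.1]{LW19}), so in the present paper I would invoke those results rather than reprove them, and the ``proof'' of Lemma~\ref{L2} is really a citation of \cite[Proposition~4.6, Corollary~4.7]{LW19} (or the analogous statements there) combined with the compactness remarks above.
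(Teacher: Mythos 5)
Your proposal matches the paper's approach: the paper's proof of Lemma~\ref{L2} is a citation of the continuity and regularity results for the transmission problem established in \cite{LW19} (specifically Proposition~3.17 and Corollary~3.12 for part~(a), Theorem~1.4, Corollary~3.14 and Lemma~3.16 for part~(b)), and your sketch of what those results involve (transport to a fixed reference configuration, uniform $H^2$-estimates from Lemma~\ref{L6}, the degeneracy of the pulled-back coefficients over $\mathcal{C}(v)$ being the delicate point) correctly describes the mechanism behind them. The one genuine refinement you add is the observation that part~(b) follows formally from part~(a): strong $H^2$-convergence implies weak convergence, so continuity is immediate, and boundedness on bounded sets follows since a bounded sequence in the weakly closed set $\bar{S}_0\subset H_D^2(D)$ has a weakly convergent subsequence whose limit stays in $\bar{S}_0$, to which part~(a) applies. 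The paper instead cites three separate results of \cite{LW19} for part~(b); both routes are valid, and yours is slightly more economical.
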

%%%%%%%%%%%%%%%%

\begin{proof}
Since weak convergence in $H^2(D)$ implies boundedness in $H^2(D)$ and strong convergence in $H^1(D)$, part~{\bf (a)} is shown in \cite[Proposition~3.17 \& Corollary~3.12]{LW19}.
As for part {\bf (b)}, the continuity of $g: \bar{S}_0\rightarrow L_2(D)$ follows from \cite[Theorem~1.4]{LW19}, while the boundedness of $g$ on bounded sets is a consequence of \cite[Corollary~3.14 \& Lemma~3.16]{LW19} and the continuity of the trace from $H^1(\Omega_1)$ to $L_p(D\times\{-H\})$ for all $p\in [1,\infty)$.
\end{proof}

We next turn to differentiability properties of $E_e$. As observed in \cite{LW19}, $E_e$ need not be Fr\'echet differentiable for all $u\in \bar{S}_0$ but it has always directional derivatives.

%%%%%%%%%%%%%%%%
\begin{lemma} \cite[Proposition~5.6]{LW19} \label{L3}
Suppose \eqref{bobbybrown}. Let $ v\in\bar{S}_0$ and $w\in S_0$. Then
\begin{equation*}
\lim_{s\rightarrow 0^+} \frac{1}{s}\big(E_e( v+s(w-v))-E_e(v)\big)= \int_D g( v) (w-v)\, \rd x\,.
\end{equation*}
\end{lemma}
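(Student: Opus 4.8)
The plan is to exploit the fact that, although the coincidence set $\mathcal{C}(v)$ of a given $v\in\bar{S}_0$ may be non-empty, the intermediate deformations $v_s:=v+s(w-v)=(1-s)v+sw$ belong to $S_0$ for every $s\in(0,1]$. Indeed, since $w\in S_0\subset C(\bar D)$ satisfies $w>-H$ on the compact set $\bar D$ (recall $w(\pm L)=0$), there is $\varepsilon>0$ with $w\ge-H+\varepsilon$ on $\bar D$, so that $v_s\ge(1-s)(-H)+s(-H+\varepsilon)=-H+s\varepsilon>-H$ on $D$; that is, $v_s\in S_0$ and $\mathcal{C}(v_s)=\emptyset$. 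Consequently, by \cite[Proposition~4.2]{LW19}, $E_e$ is Fr\'echet differentiable at $v_s$ and its derivative is represented by the first line of \eqref{gg}, i.e. $DE_e(v_s)[\eta]=\int_D g(v_s)\,\eta\,\rd x$ for $\eta\in H_D^2(D)$.

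First I would set $\varphi(s):=E_e(v_s)$ for $s\in[0,1]$. Since $s\mapsto v_s$ is affine from $[0,1]$ into $H^2(D)$ with constant derivative $w-v\in H_D^2(D)$, the chain rule combined with the Fr\'echet differentiability of $E_e$ on $S_0$ shows that $\varphi$ is differentiable on $(0,1]$ with
\begin{equation*}
\varphi'(s)=DE_e(v_s)[w-v]=\int_D g(v_s)(w-v)\,\rd x\,,\qquad s\in(0,1]\,.
\end{equation*}
Moreover $\varphi$ is continuous on all of $[0,1]$: if $s_j\to s$ in $[0,1]$, then $v_{s_j}\to v_s$ strongly, hence weakly, in $H^2(D)$ (all these functions lying in $\bar{S}_0$ by convexity), and Lemma~\ref{L2}\,\textbf{(a)} gives $\varphi(s_j)=E_e(v_{s_j})\to E_e(v_s)=\varphi(s)$.

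Next I would let $s\to0^+$ in the formula for $\varphi'$. As $v_s\to v$ strongly, hence weakly, in $H^2(D)$ when $s\to0^+$, Lemma~\ref{L2}\,\textbf{(a)} yields $\|g(v_s)-g(v)\|_2\to0$, so that, by the Cauchy--Schwarz inequality,
\begin{equation*}
\lim_{s\to0^+}\varphi'(s)=\int_D g(v)(w-v)\,\rd x=:\ell\,.
\end{equation*}
Finally I would invoke the elementary fact that a function $\varphi\in C([0,1])$ which is differentiable on $(0,1]$ and whose derivative admits a limit $\ell$ as $s\to0^+$ is right-differentiable at $0$ with $\varphi'_+(0)=\ell$: for $s\in(0,1]$ the mean value theorem provides $\xi_s\in(0,s)$ with $s^{-1}(\varphi(s)-\varphi(0))=\varphi'(\xi_s)$, and $\xi_s\to0^+$ forces $s^{-1}(\varphi(s)-\varphi(0))\to\ell$. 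Since $\varphi(s)-\varphi(0)=E_e\big(v+s(w-v)\big)-E_e(v)$, this is precisely the claimed identity.

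The crux of the matter is entirely imported from \cite{LW19}: the Fr\'echet differentiability of $E_e$ on $S_0$ together with the identification of its shape derivative as the first line of \eqref{gg} (\cite[Proposition~4.2]{LW19}), and the continuity up to $\bar{S}_0\setminus S_0$ of $g$ and $E_e$ recorded in Lemma~\ref{L2}. I expect these to be the main obstacle; once they are granted, the point of the present argument is simply that one never differentiates $E_e$ at the possibly irregular configuration $v$ itself, but only along the approximating family $(v_s)_{s>0}\subset S_0$, passing to the limit afterwards.
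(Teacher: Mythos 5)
Your proof is correct. The paper records Lemma~\ref{L3} by citation to \cite[Proposition~5.6]{LW19} rather than reproving it, but your reconstruction is exactly the natural route given the ingredients the paper sets up: the segment $v_s=(1-s)v+sw$ lies in the open set $S_0$ for every $s\in(0,1]$ (since $w\in S_0$ together with $w(\pm L)=0$ and compactness of $\bar D$ gives $w\ge -H+\varepsilon$), the chain rule and the Fr\'echet differentiability of $E_e$ on $S_0$ from \cite[Proposition~4.2]{LW19} give $\varphi'(s)=\int_D g(v_s)(w-v)\,\rd x$ on $(0,1]$, and the mean value theorem together with the $L_2$-continuity of $g$ and $E_e$ from Lemma~\ref{L2}\,\textbf{(a)} (strong $H^2$-convergence of $v_s\to v$ implies weak convergence) yields the right derivative at $s=0$. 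The only caveat is the one you already flag: the argument does not circumvent the ``rather delicate analysis'' behind the $L_2$-continuity of $g$ across $\bar S_0\setminus S_0$ — it reduces the directional-derivative statement to that continuity, which is precisely where the substance of \cite{LW19} lies.
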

%%%%%%%%%%%%%%%%

We also derive a lower bound on $E$.

%%%%%%%%%%%%%%%%
\begin{lemma}\label{L1}
Suppose \eqref{bobbybrown}. There is a constant $c_1>0$ such that
$$
E(v) \ge \frac{\beta}{4} \|\partial_x^2 v\|_2^2 - c_1 \left( 1 + \|v\|_2^2 \right)\,,\qquad v\in\bar{S}_0\,.
$$
\end{lemma}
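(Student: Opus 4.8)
The plan is to bound each of the three contributions to $E(v)=E_m(v)+E_c(v)+E_e(v)$ from below, the only delicate term being the (negative) electrostatic energy $E_e(v)$.

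First, the easy terms. Since $\mathbb{I}_{[-H,\infty)}(v)\ge 0$ pointwise on $D$ (recall $v\in\bar S_0$ means $v\ge -H$), we have $E_c(v)\ge 0$. As for the mechanical energy, $E_m(v)=\frac{\beta}{2}\|\partial_x^2 v\|_2^2+\left(\frac{\tau}{2}+\frac{a}{4}\|\partial_x v\|_2^2\right)\|\partial_x v\|_2^2\ge \frac{\beta}{2}\|\partial_x^2 v\|_2^2$ because $\tau,a\ge 0$. So it remains to show a lower bound of the form $E_e(v)\ge -\frac{\beta}{4}\|\partial_x^2 v\|_2^2 - c(1+\|v\|_2^2)$, which when combined with the above gives the claim.

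For the electrostatic term I would estimate $\vert E_e(v)\vert=\frac12\int_{\Omega(v)}\sigma|\nabla\psi_v|^2\,\mathrm d(x,z)$ by testing the transmission problem \eqref{TMP} against $\psi_v-h_v\in H^1_0(\Omega(v))$. This gives $\int_{\Omega(v)}\sigma\nabla\psi_v\cdot\nabla(\psi_v-h_v)\,\mathrm d(x,z)=0$, hence $\int_{\Omega(v)}\sigma|\nabla\psi_v|^2 = \int_{\Omega(v)}\sigma\nabla\psi_v\cdot\nabla h_v$, and by Cauchy--Schwarz and \eqref{sigma},
\[
\int_{\Omega(v)}\sigma|\nabla\psi_v|^2\,\mathrm d(x,z)\le \sigma_{max}^2\sigma_{min}^{-1}\int_{\Omega(v)}|\nabla h_v|^2\,\mathrm d(x,z)\,.
\]
Thus it suffices to control $\int_{\Omega(v)}|\nabla h_v|^2$. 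Here $h_v$ is given explicitly by \eqref{bb} in terms of $h_1,h_2$ and $v$; differentiating, $\nabla h_{v,1}=(\partial_x h_1+\partial_w h_1\,\partial_x v,\partial_z h_1)$ on $\Omega_1$ and similarly on $\Omega_2(v)$. Using the bounds \eqref{bobbybrown5}--\eqref{bobbybrown6} on the partial derivatives of $h_1,h_2$, the integrand over $\Omega_1$ (which has fixed volume $2Ld$) is bounded by $C(m_1+m_2 v(x)^2+m_3(\partial_x v(x))^2)$, and the integrand over $\Omega_2(v)$ carries the factor $(H+v(x))^{-1}$, which exactly cancels the vertical extent $H+v(x)$ of the fiber $\{-H<z<v(x)\}$ upon integrating in $z$. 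In both cases one is left, after integrating in $z$, with $\int_D\left(m_1+m_2 v^2+m_3(\partial_x v)^2\right)\mathrm dx \le C(1+\|v\|_2^2+\|\partial_x v\|_2^2)$.

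Finally I would absorb the $\|\partial_x v\|_2^2$ term. Since $v\in H^2_D(D)$, an interpolation/Ehrling-type inequality $\|\partial_x v\|_2^2\le \varepsilon\|\partial_x^2 v\|_2^2 + C_\varepsilon\|v\|_2^2$ holds; choosing $\varepsilon$ small enough (depending on $\beta,\sigma,m_i$) makes the resulting $\varepsilon$-multiple of $\|\partial_x^2 v\|_2^2$ no larger than $\frac{\beta}{4}\|\partial_x^2 v\|_2^2$, and the rest is of the form $c_1(1+\|v\|_2^2)$. Combining everything yields $E(v)\ge \frac{\beta}{2}\|\partial_x^2 v\|_2^2 - \frac{\beta}{4}\|\partial_x^2 v\|_2^2 - c_1(1+\|v\|_2^2) = \frac{\beta}{4}\|\partial_x^2 v\|_2^2 - c_1(1+\|v\|_2^2)$. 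The main obstacle is the bookkeeping in the $\Omega_2(v)$ integral: one must check carefully that the weight $(H+w)^{-1}$ in \eqref{bobbybrown6} is precisely what is needed to keep the estimate uniform as $v$ approaches $-H$ somewhere (so that parts of $\Omega_2(v)$ pinch off), and that no regularity of $\Omega_2(v)$ beyond measurability of the fibers is used at this stage.
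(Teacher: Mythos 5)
Your proposal is correct and follows essentially the same route as the paper: bound $-E_e(v)$ by the Dirichlet energy of the boundary data $h_v$, then estimate $\nabla h_v$ via the chain rule and \eqref{bobbybrown5}--\eqref{bobbybrown6} (with the weight $(H+w)^{-1}$ exactly cancelling the fiber length over $\Omega_2(v)$), and finally absorb $\|\partial_x v\|_2^2$ by interpolation into $\tfrac{\beta}{4}\|\partial_x^2 v\|_2^2$. The paper invokes the Dirichlet principle directly, $\int_{\Omega(v)}\sigma|\nabla\psi_v|^2\le\int_{\Omega(v)}\sigma|\nabla h_v|^2$, which is equivalent to your testing-against-$\psi_v-h_v$ argument (applying Cauchy--Schwarz with the weight $\sigma$ would give you the constant $\sigma_{max}$ rather than $\sigma_{max}^2\sigma_{min}^{-1}$, but the difference is immaterial).
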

%%%%%%%%%%%%%%%%

\begin{proof}
For the sake  of completeness, we recall the proof performed in \cite[Theorem~5.1]{LW19}. Since $\psi_v$ is a variational solution to \eqref{TMP}, it follows from \eqref{sigma}, \eqref{bobbybrown}, and Young's inequality that
\begin{equation*}
\begin{split}
-E_e(v)&=\frac{1}{2}\int_{\Omega(v)}\sigma\vert\nabla\psi_v\vert^2\,\rd(x,z)\le \frac{1}{2} \int_{\Omega(v)}\sigma\vert\nabla h_v\vert^2\,\rd(x,z)\\
& \le \int_{\Omega(v)} \sigma\left[\left(\partial_x h(x,z,v(x))\right)^2+\left(\partial_w h(x,z,v(x))\right)^2 (\partial_x v(x))^2\right]\,\rd (x,z)\\
&\qquad + \frac{1}{2} \int_{\Omega(v)} \sigma \left(\partial_z h(x,z,v(x))\right)^2\,\rd (x,z)\\
& \le (d+1) \sigma_{max} \int_D \left[ \frac{3}{2} (m_1+m_2v(x)^2) + m_3(\partial_x v(x))^2\right]\,\rd x\,.
\end{split}
\end{equation*}
We next use Poincar\'e's inequality 
\begin{equation}
\|w\|_2 \le |D| \|\partial_x w\|_2\ , \qquad w\in H_0^1(D)\,, \label{poincare}
\end{equation}
and the interpolation inequality
\begin{equation}
\|\partial_x w \|_2^2 \le \|w\|_2 \|\partial_x^2 w\|_2\,, \qquad w\in H^2(D)\cap H^1_0(D)\,, \label{interpolation}
\end{equation}
to obtain
\begin{equation*}
-E_e(v) \le c \left( 1 + \|\partial_x v\|_{2}^2 \right) \le c \left( 1 + \|v\|_2 \|\partial_x^2 v\|_2 \right)\,.
\end{equation*}
Consequently, Young's inequality and the above upper bound yield
\begin{equation*}
E(v) \ge \frac{\beta}{2} \|\partial_x^2 v\|_2^2 - c - c \|v\|_2 \|\partial_x^2 v\|_2 \ge \frac{\beta}{4} \|\partial_x^2 v\|_2^2 - c - c \|v\|_2^2\,,
\end{equation*}
and the proof is complete.
\end{proof}

We next provide the basis for the time implicit scheme used later in order to construct a solution to \eqref{evoleq}.

%%%%%%%%%%%%%%%%
\begin{lemma}\label{L4}
Set $\delta_0:= \min\{ 1 , (16c_1)^{-1}\}>0$. Then, for any $\delta\in (0,\delta_0)$ and $f\in \bar{S}_0$, there is $v\in \bar{S}_0$ such that
$$
-\frac{1}{\delta}(v-f)-\beta\partial_x^4 v+ (\tau+a\|\partial_x v\|_{2}^2)\partial_x^2 v-g(v)\in\partial\mathbb{I}_{\bar S_0}(v) \,.
$$
Moreover,
$$
\frac{1}{2\delta}\|v-f\|_{ 2}^2+E(v)\le E(f)\,.
$$
\end{lemma}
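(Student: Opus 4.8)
The plan is to obtain $v$ as a minimizer of the auxiliary functional
$$
F_f^\delta(w) := \frac{1}{2\delta}\|w-f\|_2^2 + E(w)\,,\qquad w\in\bar{S}_0\,,
$$
and then to identify its Euler--Lagrange inequality. First I would check that $F_f^\delta$ is bounded below and coercive: by Lemma~\ref{L1},
$$
F_f^\delta(w)\ge \frac{1}{2\delta}\|w-f\|_2^2 + \frac{\beta}{4}\|\partial_x^2 w\|_2^2 - c_1(1+\|w\|_2^2)\,,
$$
and using $\|w\|_2^2\le 2\|w-f\|_2^2+2\|f\|_2^2$ together with the constraint $\delta<\delta_0\le (16c_1)^{-1}$, the term $\tfrac{1}{2\delta}\|w-f\|_2^2$ dominates $2c_1\|w-f\|_2^2$, leaving $F_f^\delta(w)\ge \tfrac{\beta}{4}\|\partial_x^2 w\|_2^2 - c$ for some constant depending on $f$. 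Since $w\in H_D^2(D)$, the $H^2$-norm is controlled by $\|\partial_x^2 w\|_2$ via Poincar\'e's inequality~\eqref{poincare} and interpolation~\eqref{interpolation}, so $F_f^\delta$ is coercive on $\bar{S}_0$ in the $H^2(D)$-topology and bounded below.

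Next I would run the direct method. Take a minimizing sequence $(w_j)_{j\ge1}\subset\bar{S}_0$; by coercivity it is bounded in $H_D^2(D)$, hence (up to a subsequence) $w_j\rightharpoonup v$ in $H^2(D)$ for some $v\in H_D^2(D)$, and $w_j\to v$ in $H^1(D)$ and pointwise a.e. Since $\bar{S}_0$ is closed and convex in $H_D^2(D)$ it is weakly closed, so $v\in\bar{S}_0$. The map $w\mapsto \tfrac{1}{2\delta}\|w-f\|_2^2$ is weakly lower semicontinuous, as is the convex mechanical energy $E_m$; the contact energy $E_c$ is lower semicontinuous under a.e.\ convergence by Fatou (or is simply identically $0$ on $\bar{S}_0$); and $E_e(w_j)\to E_e(v)$ by Lemma~\ref{L2}(a). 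Hence $F_f^\delta(v)\le\liminf_j F_f^\delta(w_j)=\inf_{\bar{S}_0}F_f^\delta$, so $v$ is a minimizer. The inequality $\tfrac{1}{2\delta}\|v-f\|_2^2+E(v)=F_f^\delta(v)\le F_f^\delta(f)=E(f)$ is then immediate, giving the second assertion.

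It remains to derive the variational inequality. For any $w\in S_0$ and $s\in(0,1)$, convexity of $\bar{S}_0$ gives $v+s(w-v)\in\bar{S}_0$, so $F_f^\delta(v+s(w-v))\ge F_f^\delta(v)$. Dividing by $s$ and letting $s\to0^+$, the quadratic term and $E_m$ contribute their classical derivatives, $E_c\equiv 0$ contributes nothing, and the electrostatic term contributes $\int_D g(v)(w-v)\,\rd x$ by Lemma~\ref{L3}. This yields
$$
\frac{1}{\delta}\int_D (v-f)(w-v)\,\rd x + \beta\int_D \partial_x^2 v\,\partial_x^2(w-v)\,\rd x + (\tau+a\|\partial_x v\|_2^2)\int_D \partial_x v\,\partial_x(w-v)\,\rd x + \int_D g(v)(w-v)\,\rd x \ge 0
$$
for all $w\in S_0$, and by density of $S_0$ in $\bar{S}_0$ (in the $H^2$-topology) together with the continuity of $g$ from Lemma~\ref{L2}(b), the same holds for all $w\in\bar{S}_0$. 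Rewriting the $\partial_x$-term as $-(\tau+a\|\partial_x v\|_2^2)\langle\partial_x^2 v,w-v\rangle$ and the $\partial_x^2$-term as $\beta\langle\partial_x^4 v,w-v\rangle$ using the definition of $\partial_x^4$ as an element of $H^{-2}(D)$, this says precisely that
$$
-\frac{1}{\delta}(v-f)-\beta\partial_x^4 v + (\tau+a\|\partial_x v\|_2^2)\partial_x^2 v - g(v)\in\partial\mathbb{I}_{\bar{S}_0}(v)\,,
$$
which is the first assertion. The main obstacle I anticipate is the passage to the directional derivative: Lemma~\ref{L3} only provides the one-sided derivative of $E_e$ along directions pointing into $S_0$, so I must be careful to test only with $w\in S_0$ first and recover the full inequality on $\bar{S}_0$ afterwards by approximation, relying on the continuity of $g$; this is also why the coercivity bookkeeping (ensuring $\delta_0\le(16c_1)^{-1}$ genuinely absorbs the bad term) deserves attention rather than being waved through.
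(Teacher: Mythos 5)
Your proposal is correct and follows essentially the same route as the paper: minimize the penalized functional $\mathcal{F}$ via the direct method, using the lower bound from Lemma~\ref{L1} together with the smallness of $\delta$ to get coercivity and boundedness below, the weak closedness of $\bar S_0$, the weak lower semicontinuity of $E_m$ and continuity of $E_e$ from Lemma~\ref{L2}(a) to pass to the limit, and then the one-sided directional derivative from Lemma~\ref{L3} together with the density of $S_0$ in $\bar S_0$ to derive the variational inequality. The only superfluous detail is the invocation of Lemma~\ref{L2}(b) in the final density step: since $g(v)\in L_2(D)$ is fixed, the map $w\mapsto\int_D g(v)(w-v)\,\rd x$ is already continuous and linear in $w$, so no continuity of $g$ itself is needed there.
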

%%%%%%%%%%%%%%%%

\begin{proof}
The proof relies on the direct method of calculus of variations. Consider $\delta\in (0,\delta_0)$ and $f\in \bar S_0$ and define 
$$
\mathcal{F}(v) := \frac{1}{2\delta} \|v-f\|_2^2 + E(v)\,,\quad v\in \bar S_0\, .
$$
Then, by Lemma~\ref{L1} and Young's inequality,
\begin{equation*}
\begin{split}
\mathcal{F}(v) & \ge \frac{1}{2\delta} \left( \frac{\|v\|_2^2}{2} - \|f\|_2^2 \right) + \frac{\beta}{4} \|\partial_x^2 v\|_2^2 - c_1 \left( 1 + \|v\|_2^2 \right) \\
& \ge \frac{\beta}{4} \|\partial_x^2 v\|_2^2 + \left( \frac{1}{4\delta} - c_1 \right) \|v\|_2^2 - c_1 - \frac{\|f\|_{2}^2}{2\delta} \\
& \ge \frac{\beta}{4} \|\partial_x^2 v\|_2^2 + 3 c_1 \|v\|_2^2 - c_1 - \frac{\|f\|_{2}^2}{ 2\delta} 
\end{split}
\end{equation*}
for all $v\in \bar{S}_0$. Thus, $\mathcal{F}$ is bounded from below on $\bar{S}_0$ and there is a minimizing sequence $(v_j)_{j\ge 1}$ in $\bar S_0$ satisfying 
\begin{equation}
\mu := \inf_{v\in\bar S_0}\{\mathcal{F}(v)\} \le \mathcal{F}(v_j) \le \mu + \frac{1}{j}\, , \qquad j\ge 1\,. \label{EvEq016}
\end{equation}
 Moreover, the previous lower bound on $\mathcal{F}$ guarantees that $(v_j)_{j\ge 1}$ is bounded in $H_D^2(D)$. Therefore, there is $v\in H_D^2(D)$ such that  (up to a subsequence) 
\begin{subequations}\label{EV}
\begin{align}
v_j & \rightharpoonup v \;\text{ in }\; H^2(D)\, , \label{EvEq017} \\
v_j & \longrightarrow v \;\text{ in }\; H_0^1(D) \,. \label{EvEq018}
\end{align}
\end{subequations}
Clearly, \eqref{EvEq017} ensures that $v\in \bar S_0$ since $\bar S_0$ is closed and convex in $H_D^2(D)$. It then follows from \eqref{EvEq017} and Lemma~\ref{L2}~{\bf (a)}  that
$$
 E_e(v)=\lim_{j\rightarrow \infty} E_e(v_j)\,,
$$
while \eqref{EV} and the weak lower semicontinuity of the $L_2$-norm readily imply that
\begin{equation*}
E_m(v) \le \liminf_{j\to\infty} E_m(v_j)\,.
\end{equation*}
Consequently, 
$$
\mathcal{F}(v)\le \liminf_{j\rightarrow \infty} \mathcal{F}(v_j)=\mu\,,
$$
and we conclude that $v\in\bar S_0$ is a minimizer of $\mathcal{F}$ on $\bar S_0$. This property, in turn, guarantees that, for $w\in  S_0$,
$$
0\le \liminf_{s\rightarrow 0^+} \frac{1}{ s}\big(\mathcal{F}(v+s(w-v))-\mathcal{F}(v)\big)\,.
$$
It then follows from Lemma~\ref{L3} that
\begin{align*}
0 & \le  \int_D \left\{\frac{v-f}{\delta}(w-v)+\beta\partial_x^2 v\,\partial_x^2 (w-v)- \left( \tau+a\|\partial_x v\|_{2}^2 \right) \partial_x v \partial_x (w-v)\right\}\,\rd x \\
& \qquad + \int_D g(v) (w-v)\, \rd x
\end{align*}
for all $w\in S_0$. Since $S_0$ is dense in $\bar{S_0}$,  this inequality also holds for any $w\in \bar{S_0}$. Therefore,
$$
-\frac{1}{\delta}(v-f)-\beta\partial_x^4 v+ (\tau+a\|\partial_x v\|_{2}^2)\partial_x^2 v-g(v)\in\partial\mathbb{I}_{\bar S_0}(v) \,.
$$
Finally, since $f\in \bar S_0$, we have $\mathcal{F}(v)\le \mathcal{F}(f)$, which completes the proof.
\end{proof}

%%%%%%%%%%%%%%%%
%%%%%%%%%%%%%%%%
\section{Proof of Theorem~\ref{T1}}\label{S4}
%%%%%%%%%%%%%%%%
%%%%%%%%%%%%%%%%

Fix $u_0\in \bar S_0$. For $\delta\in (0,\delta_0)$, we set $u_0^\delta:=u_0$ and, using Lemma~\ref{L4}, we construct by induction a sequence $(u_n^\delta,\zeta_n^\delta)_{n \ge 1}$ in $\bar S_0\times H^{-2}(D)$ such that
\begin{equation}\label{e4}
\zeta_{n+1}^\delta := -A_{n+1}^\delta-\beta\partial_x^4 u_{n+1}^\delta+ (\tau+a\|\partial_x u_{n+1}^\delta\|_{2}^2)\partial_x^2 u_{n+1}^\delta-g(u_{n+1}^\delta)\in\partial\mathbb{I}_{\bar S_0}(u_{n+1}^\delta)\,,
\end{equation}
where
$$
A_{n+1}^\delta:=\frac{1}{\delta}(u_{n+1}^\delta-u_{n}^\delta)\in H_D^2(D)\,,
$$
and
\begin{equation}\label{e5}
\frac{1}{2\delta}\|u_{n+1}^\delta-u_{n}^\delta\|_{2}^2+E(u_{n+1}^\delta)\le E(u_{n}^\delta)
\end{equation}
for $n\ge 0$. Let us first note that \eqref{e5} implies
\begin{equation}\label{e5a}
\frac{1}{2\delta}\sum_{j=0}^n\|u_{j+1}^\delta-u_{j}^\delta\|_{2}^2+E\big(u_{n+1}^\delta\big)\le E(u_0)\,, \qquad n\ge 0\,.
\end{equation}
A first consequence of \eqref{e5a} is an $L_2$-estimate on $(u_n^\delta)_{n\ge 1}$, which is adapted from \cite[Lemma~3.2.2]{AGS08}. More precisely, it follows from H\"older's and Young's inequalities that, for $n\ge 0$,
\begin{align*}
\|u_{n+1}^\delta \|_2^2 - \|u_0\|_2^2 & = \sum_{j=0}^n \left( \|u_{j+1}^\delta \|_2^2 - \|u_j^\delta\|_2^2 \right) \\
& = \sum_{j=0}^n \int_D \left( u_{j+1}^\delta - u_j^\delta \right) \left( u_{j+1}^\delta + u_j^\delta \right) \,\rd x \\
& \le \sum_{j=0}^n \| u_{j+1}^\delta - u_j^\delta \|_2 \| u_{j+1}^\delta + u_j^\delta \|_2 \\
& \le \frac{1}{4 c_1 \delta} \sum_{j=0}^n \| u_{j+1}^\delta - u_j^\delta \|_2^2 + c_1 \delta \sum_{j=0}^n \| u_{j+1}^\delta + u_j^\delta \|_2^2 \,.
\end{align*}
We then infer from Lemma~\ref{L1} and \eqref{e5a} that
\begin{align*}
\|u_{n+1}^\delta \|_2^2 - \|u_0\|_2^2& \le \frac{E(u_0) - E(u_{n+1}^\delta)}{2c_1} + 2 c_1 \delta \sum_{j=0}^n \left( \| u_{j+1}^\delta \|_2^2 + \| u_j^\delta \|_2^2 \right) \\
& \le \frac{E(u_0) +c_1 + c_1 \|u_{n+1}^\delta\|_2^2}{2c_1} +  2c_1\delta \|u_0\|_2^2 + 4 c_1 \delta \sum_{j=1}^{n+1} \| u_{j}^\delta \|_2^2 \,.
\end{align*}
Hence,
\begin{equation*}
\|u_{n+1}^\delta \|_2^2 \le (2+ 4c_1\delta) \|u_0\|_2^2 + 1 + \frac{E(u_0)}{c_1} + 8 c_1 \delta \sum_{j=1}^{n+1} \| u_{j}^\delta \|_2^2\,, \qquad n\ge 0\,.
\end{equation*}
Since $\delta\in (0,\delta_0)$, we have $8c_1\delta<1/2<1$ and we are thus in a position to apply a discrete version of Gronwall's lemma, see \cite[Lemma~3.2.4]{AGS08}, to conclude that
\begin{equation}
\|u_{n+1}^\delta \|_2^2 \le \left( 6 \|u_0\|_2^2 + 2 + \frac{2 E(u_0)}{c_1} \right) e^{16c_1(n+1)\delta}\,, \qquad n\ge 0\,. \label{L2estimate}
\end{equation}
We next use again Lemma~\ref{L1} and \eqref{e5a}, along with \eqref{L2estimate}, to obtain that, for $n\ge 0$,
\begin{equation*}
\begin{split}
& \frac{1}{2\delta}\sum_{j=0}^n\|u_{j+1}^\delta-u_{j}^\delta\|_{2}^2 + \frac{\beta}{4} \|\partial_x^2 u_{n+1}^\delta\|_{2}^2 \\
& \qquad \le \frac{1}{2\delta}\sum_{j=0}^n\|u_{j+1}^\delta-u_{j}^\delta\|_{2}^2 + E\left( u_{n+1}^\delta \right) + c_1 \left( 1 + \|u_{n+1}^\delta \|_2^2 \right) \\
& \qquad \le E(u_0) + c_1 \left( 1 + \|u_{n+1}^\delta \|_2^2 \right) \\
& \qquad \le c_2 e^{16c_1 n\delta}\,.
\end{split}
\end{equation*}
Owing to the functional inequalities \eqref{poincare} and \eqref{interpolation}, we end up with 
\begin{equation}\label{e6}
\frac{1}{2\delta}\sum_{j=0}^n\|u_{j+1}^\delta-u_{j}^\delta\|_{2}^2 + \|u_{n+1}^\delta\|_{H^2}^2 \le c_3 e^{16c_1 n\delta}\,,\qquad n\ge 0\,.
\end{equation}
We next define the functions $u^\delta, A^\delta: [0,\infty) \rightarrow H_D^2(D)$ and $\zeta^\delta: [0,\infty)\rightarrow H^{-2}(D)$ by
$$
u^\delta(t):=\sum_{n\ge 0} u_{n}^\delta \mathbf{1}_{[n\delta,(n+1)\delta)}(t)\,,\quad t\ge 0\,,
$$
$$
A^\delta(t):=\sum_{n\ge  1} A_{n}^\delta \mathbf{1}_{[n\delta,(n+1)\delta)}(t) \,,\quad t\ge 0\,,
$$
and
$$
\zeta^\delta(t):=\sum_{n\ge 1}\zeta_{n}^\delta \mathbf{1}_{[n\delta,(n+1)\delta)}(t)\,,\quad t\ge 0\,,
$$
respectively.

%%%%%%%%%%%%%%%%
\begin{lemma}\label{L5}
There are a sequence $\delta_\ell\rightarrow 0$ and 
$$
u\in C([0,\infty), H^1(D))\cap L_\infty((0,\infty), H_D^2(D))\cap H_{loc}^1([0,\infty), L_2(D))
$$ 
with $u(0)=u_0$ such that
\begin{equation}
u(t) \in \bar S_0\,, \qquad t\ge 0\,, \label{e202d}
\end{equation}
and, for all $t>0$,
\begin{subequations}\label{e202t}
\begin{align}
u^{\delta_\ell}(t) & \rightarrow u(t)\;\text{ in }\;   H^1(D)\,, \label{e202a} \\
u^{\delta_\ell} & \rightharpoonup u\;\text{ in }\;  L_2((0,t), H_D^2(D))\,, \label{e202b}\\
g\big(u^{\delta_\ell}\big) & \rightarrow g(u)\;\text{ in }\;  L_2((0,t), L_2(D))\,, \label{e202bb} \\
A^{\delta_\ell} & \rightharpoonup \partial_t u\;\text{ in }\;  L_2((0,t), L_2(D))\,. \label{e202c}
\end{align}
\end{subequations}
In particular, 
\begin{equation*}
\zeta^{\delta_\ell} \rightharpoonup \zeta \;\text{ in }\; L_2((0,t),H^{-2}(D)) 
\end{equation*}
for any $t>0$ and 
\begin{equation}\label{e210}
\zeta := -\partial_t u-\beta\partial_x^4 u+\big(\tau+a\|\partial_x u\|_{2}^2\big)\partial_x^2 u-g(u)\in L_{2,loc}([0,\infty),H^{-2}(D))\,.
\end{equation}
\end{lemma}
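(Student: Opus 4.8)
The strategy is the standard compactness argument for discrete gradient flows, built on the uniform bounds already in hand. From \eqref{e6} we have, for every fixed $T>0$ and all $\delta\in(0,\delta_0)$ with $n\delta\le T$, a bound of the form $\|u_{n+1}^\delta\|_{H^2}^2\le c_3 e^{16c_1 T}$ and $\frac{1}{\delta}\sum_j\|u_{j+1}^\delta-u_j^\delta\|_2^2\le c_3 e^{16c_1 T}$. Translating these into statements about the interpolants: $(u^\delta)_\delta$ is bounded in $L_\infty((0,T),H_D^2(D))$, and $(A^\delta)_\delta$ is bounded in $L_2((0,T),L_2(D))$ because $\int_0^T\|A^\delta(t)\|_2^2\,\rd t=\delta\sum_n\|A_n^\delta\|_2^2=\frac1\delta\sum_n\|u_n^\delta-u_{n-1}^\delta\|_2^2$. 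I would also introduce the piecewise-\emph{affine} interpolant $\hat u^\delta$, whose time derivative is exactly $A^\delta$ (shifted), so that $\hat u^\delta$ is bounded in $H^1((0,T),L_2(D))\cap L_\infty((0,T),H_D^2(D))$; note $\|u^\delta-\hat u^\delta\|_{L_2((0,T),L_2)}^2\le c\delta\sum_n\|u_{n+1}^\delta-u_n^\delta\|_2^2=c\delta^2\cdot(\text{bounded})\to 0$, so $u^\delta$ and $\hat u^\delta$ share the same limit.

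Next I would extract the subsequence. By reflexivity and weak-$*$ compactness, along some $\delta_\ell\to 0$ we get $\hat u^{\delta_\ell}\rightharpoonup u$ in $H^1((0,T),L_2(D))$ and weak-$*$ in $L_\infty((0,T),H_D^2(D))$, with $A^{\delta_\ell}\rightharpoonup\partial_t u$ in $L_2((0,T),L_2(D))$ (this is \eqref{e202c}), and a diagonal argument over $T=1,2,\dots$ makes the subsequence independent of $T$, yielding $u\in L_\infty((0,\infty),H_D^2(D))\cap H_{loc}^1([0,\infty),L_2(D))$. The Aubin--Lions--Simon lemma, with $H_D^2(D)\hookrightarrow\hookrightarrow H^1(D)\hookrightarrow L_2(D)$, applied to $\hat u^{\delta_\ell}$ gives strong convergence in $C([0,T],H^1(D))$, hence $u\in C([0,\infty),H^1(D))$ and the pointwise-in-$t$ convergence $u^{\delta_\ell}(t)\to u(t)$ in $H^1(D)$ for every $t$, which is \eqref{e202a}; that $u(0)=u_0$ follows since $u^\delta(0)=u_0$ for all $\delta$. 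The weak $L_2((0,t),H_D^2)$ convergence \eqref{e202b} follows from the $L_\infty((0,t),H_D^2)$ bound and identification of the limit with $u$ via the already-established strong $L_2$ convergence. For \eqref{e202bb}, since $u^{\delta_\ell}(t)\rightharpoonup u(t)$ in $H^2(D)$ for a.e.\ $t$ (indeed for every $t$), Lemma~\ref{L2}\,{\bf(a)} gives $\|g(u^{\delta_\ell}(t))-g(u(t))\|_2\to 0$ for each $t$, while Lemma~\ref{L2}\,{\bf(b)} together with the uniform $H^2$ bound \eqref{e6} bounds $\|g(u^{\delta_\ell}(t))\|_2$ uniformly, so dominated convergence yields convergence in $L_2((0,t),L_2(D))$. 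The constraint $u(t)\in\bar S_0$ in \eqref{e202d} holds because $\bar S_0$ is closed and convex in $H_D^2(D)$, hence weakly closed, and $u^{\delta_\ell}(t)\rightharpoonup u(t)$ in $H^2(D)$.

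For the final assertions, the definition \eqref{e210} of $\zeta$ makes sense in $L_{2,loc}([0,\infty),H^{-2}(D))$ since $\partial_t u\in L_{2,loc}(L_2)\hookrightarrow L_{2,loc}(H^{-2})$, $\partial_x^4 u\in L_\infty(H^{-2})$, $\|\partial_x u\|_2^2\,\partial_x^2 u\in L_\infty(L_2)$ by the $H^2$ bound, and $g(u)\in L_{2,loc}(L_2)$; then $\zeta^{\delta_\ell}\rightharpoonup\zeta$ in $L_2((0,t),H^{-2}(D))$ term by term, using \eqref{e202c} for $A^{\delta_\ell}$, \eqref{e202b} for $\beta\partial_x^4 u^{\delta_\ell}$ (testing against $L_2((0,t),H_D^2)$), \eqref{e202bb} for $g(u^{\delta_\ell})$, and for the quasilinear term $(\tau+a\|\partial_x u^{\delta_\ell}\|_2^2)\partial_x^2 u^{\delta_\ell}$ one combines the strong $H^1$ convergence \eqref{e202a} (so $\|\partial_x u^{\delta_\ell}(t)\|_2^2\to\|\partial_x u(t)\|_2^2$ boundedly) with the weak $L_2((0,t),L_2)$ convergence of $\partial_x^2 u^{\delta_\ell}$. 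The main obstacle is precisely the passage to the limit in this nonlinear term and the simultaneous handling of the three interpolants (piecewise constant $u^\delta$, piecewise affine $\hat u^\delta$, and the time-shift relating $A^\delta$ to $\partial_t\hat u^\delta$): one must keep careful track of the $O(\delta)$ discrepancies and verify that the difference quotients $A^\delta$ genuinely converge to $\partial_t u$ and not merely to the derivative of some other limit; the energy-monotonicity bound \eqref{e5a} and the $L_2$ bound \eqref{L2estimate} are what close this gap.
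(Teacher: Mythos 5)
Your proof is correct, but it takes a genuinely different route to compactness than the paper. The paper works directly with the piecewise \emph{constant} interpolant $u^\delta$: from \eqref{e6} it derives an explicit equi-H\"older estimate $\|u^\delta(t_2)-u^\delta(t_1)\|_2\le c\sqrt{t_2-t_1+\delta}\,e^{8c_1t_2}$ together with the uniform $H^2$ bound, and then invokes a discrete variant of the Arzel\`a--Ascoli theorem \cite[Proposition~3.3.1]{AGS08} to obtain, for \emph{every} $t\ge 0$, strong convergence $u^{\delta_\ell}(t)\to u(t)$ in $L_2(D)$, which is then upgraded to $H^1(D)$ by interpolation against the uniform $H^2$ bound. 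You instead introduce the piecewise \emph{affine} interpolant $\hat u^\delta$, whose time derivative is $A^\delta$, and apply the Aubin--Lions--Simon lemma to $(\hat u^{\delta_\ell})$ in $L_\infty(H^2)\cap H^1(L_2)\hookrightarrow\hookrightarrow C([0,T],H^1)$, then transfer the limit to $u^\delta$ via the closeness of the two interpolants. Both are standard and give the same result; the affine-interpolant route has the advantage that $A^\delta$ is literally $\partial_t\hat u^\delta$ (so identification of the limit as $\partial_t u$ is immediate), whereas the paper instead identifies $\partial_t u$ via a distributional argument; the AGS route avoids juggling two interpolants and yields the for-every-$t$ statement \eqref{e202a} directly. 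One small point in your write-up: to transfer the $C([0,T],H^1)$ convergence of $\hat u^{\delta_\ell}$ to the pointwise-in-$t$ statement \eqref{e202a} for $u^{\delta_\ell}$, the $L_2$-in-time bound $\|u^\delta-\hat u^\delta\|_{L_2((0,T),L_2)}\to 0$ is not quite enough; you need the sup-in-time estimate $\sup_{[0,T]}\|u^\delta(t)-\hat u^\delta(t)\|_2\le\max_n\|u_{n+1}^\delta-u_n^\delta\|_2\le\sqrt{2\delta c_3}\,e^{8c_1 T}\to 0$, which does follow from \eqref{e6} but should be stated explicitly. Your discussion of the nonlinear term in $\zeta^{\delta_\ell}$ (strong convergence of $\|\partial_x u^{\delta_\ell}(t)\|_2^2$ paired with weak convergence of $\partial_x^2 u^{\delta_\ell}$) is actually more detailed than the paper's, which simply calls this ``straightforward.''
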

%%%%%%%%%%%%%%%%

\begin{proof}
Given $0\le t_1<t_2$ there are integers $n_1\le n_2$ such that $t_i\in [n_i \delta,(n_i+1)\delta)$, $i=1,2$. Either $n_1=n_2$ and $u^\delta(t_2) = u^\delta(t_1)$. Or $n_1<n_2$ and we infer from \eqref{e6} and H\"older's inequality that
\begin{align*}
\| u^\delta(t_2)-  u^\delta(t_1) \|_{2} &= \|u^\delta_{n_2}-u^\delta_{n_1} \|_{2}\le \sum_{j=n_1}^{n_2-1} \|u_{j+1}^\delta-u_{j}^\delta \|_{2} \\
& \le \sqrt{n_2-n_1} \left(\sum_{j=n_1}^{n_2-1} \| u_{j+1}^\delta-u_{j}^\delta \|_{2}^2\right)^{1/2} \\
& \le \sqrt{2 c_3} \sqrt{n_2 \delta-n_1\delta}\, e^{8c_1 n_2 \delta} \,.
\end{align*}
Thus,
\begin{equation}
\| u^\delta(t_2)-  u^\delta(t_1) \|_{2} \le \sqrt{ 2 c_3} \sqrt{t_2 -t_1+\delta}\, e^{8c_1 t_2}\,, \qquad 0\le t_1<t_2\,. \label{e201}
\end{equation}
Moreover, for $t>0$, there is an integer $n\ge 0$ such that $t\in [n\delta,(n+1)\delta)$ and, either $n=0$ and $u^\delta(t)=u_0$, or, again by \eqref{e6}, $\|u^\delta(t)\|_{H^2}^2 \le c_3 e^{16c_1 t}$. Consequently,
\begin{equation}\label{e202}
\| u^\delta(t) \|_{H^2} \le c_4 e^{16c_1 t}\,, \qquad t\ge 0\,.
\end{equation}
Since $H^2(D)$ embeds compactly in $L_2(D)$, we infer from  \eqref{e201} and \eqref{e202} that we may apply a variant of the Arzel\`a-Ascoli theorem, see \cite[Proposition~3.3.1]{AGS08}, and a diagonal argument to obtain the existence of a sequence $(\delta_\ell)_{\ell\ge 1}$, $\delta_\ell\rightarrow 0$, and 
$$
u\in C([0,\infty), L_2(D))\cap L_{\infty,loc}([0,\infty), H_D^2(D))
$$ 
such that
\begin{equation}\label{e202aa}
u^{\delta_\ell}(t)\rightarrow u(t)\;\text{ in }\;   L_2(D) \,, \qquad t\ge 0\,,
\end{equation}
and
\begin{equation*} 
u^{\delta_\ell}\rightharpoonup u\;\text{ in }\;  L_2((0,T), H_D^2(D))\,, \qquad T>0\,. 
\end{equation*}
We have thus proved \eqref{e202b}. Next, an interpolation argument, together with \eqref{e202} and \eqref{e202aa}, yields \eqref{e202a} and the stated time continuity of $u$ in $H^1(D)$. Furthermore, combining \eqref{e202a}, \eqref{e202}, and Lemma~\ref{L2} allows one to apply Lebesgue's theorem to deduce \eqref{e202bb}. Also, since $u^\delta(0)=u_0$ and $u^\delta(t)\in \bar S_0$ for $t\ge 0$ and $\delta\in (0,\delta_0)$, we readily deduce from \eqref{e202aa} that $u(0)=u_0$ and $u(t)\in \bar S_0$ for $t\ge 0$. 

Next, for $t>0$, there is $n_\ell\ge 0$ such that $t\in [n_\ell \delta_\ell,(n_\ell+1)\delta_\ell)$ and it follows from \eqref{e6} that
\begin{equation}\label{iii}
\int_0^t \| A^{\delta_\ell}(s)\|_{2}^2\,\rd s \le \int_0^{(n_\ell+1)\delta_\ell} \| A^{\delta_\ell}(s)\|_{2}^2\,\rd s = \frac{1}{\delta_\ell}\sum_{j=0}^{n_\ell}\|u_{j+1}^\delta-u_{j}^\delta\|_{2}^2 \le 2 c_3 e^{16 c_1 t}\,.
\end{equation}
Since 
\begin{equation*}
A^\delta(t,x) = \frac{u^\delta(t,x)-u^\delta(t-\delta,x)}{\delta}\,, \qquad (t,x)\in (\delta,\infty)\times D\,,
\end{equation*}
and $A^\delta(t,x)=0$ for $(t,x)\in (0,\delta)\times D$,
the sequence $(A^{\delta_\ell})_{\ell\ge 1}$ converges to $\partial_t u$ in $\mathcal{D}'((0,\infty)\times D)$ as $\ell\to \infty$, so that the just established boundedness of $(A^{\delta_\ell})_{\ell\ge 1}$ in $L_{2,loc}([0,\infty),L_2(D))$ implies that $\partial_t u \in L_{2,loc}([0,\infty),L_2(D))$ and the convergence \eqref{e202c} (up to a subsequence). The stated convergence of $\left( \zeta^{\delta_\ell} \right)_{\ell\ge 1}$ and the regularity of $\zeta$ are straightforward consequences of the regularity of $u$ and \eqref{e202t}. 
\end{proof}

We next prove the energy inequality \eqref{400}. 

%%%%%%%%%%%%%%%%
\begin{lemma}\label{L7}
For $t>0$, 
\begin{equation*}
\frac{1}{2} \int_0^t \|\partial_t u(s)\|_2^2 \,\rd s + E(u(t)) \le E(u_0)\,.
\end{equation*}
\end{lemma}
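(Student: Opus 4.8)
The plan is to pass to the limit $\delta_\ell \to 0$ in the discrete energy inequality \eqref{e5a} and exploit the weak lower semicontinuity afforded by the various convergences in Lemma~\ref{L5}. Fix $t>0$ and pick, for each $\ell$, an integer $n_\ell \ge 0$ with $t\in [n_\ell\delta_\ell,(n_\ell+1)\delta_\ell)$, so that $u^{\delta_\ell}(t) = u_{n_\ell}^{\delta_\ell}$ and $n_\ell\delta_\ell \to t$. From \eqref{e5a} with $n+1 = n_\ell$ we have
\begin{equation*}
\frac{1}{2\delta_\ell}\sum_{j=0}^{n_\ell-1}\|u_{j+1}^{\delta_\ell}-u_j^{\delta_\ell}\|_2^2 + E\big(u^{\delta_\ell}(t)\big) \le E(u_0)\,.
\end{equation*}
First I would rewrite the quadratic sum in terms of $A^{\delta_\ell}$: it equals $\tfrac12\int_0^{n_\ell\delta_\ell}\|A^{\delta_\ell}(s)\|_2^2\,\rd s$, which differs from $\tfrac12\int_0^t\|A^{\delta_\ell}(s)\|_2^2\,\rd s$ by at most the integral over the single subinterval $[n_\ell\delta_\ell,t)$ of length less than $\delta_\ell$, and this remainder is controlled (hence tends to $0$) by \eqref{iii}; alternatively one simply notes $\int_0^t\|A^{\delta_\ell}\|_2^2 \le \frac{1}{\delta_\ell}\sum_{j=0}^{n_\ell-1}\|u_{j+1}^{\delta_\ell}-u_j^{\delta_\ell}\|_2^2 + \frac{1}{\delta_\ell}\|u_{n_\ell+1}^{\delta_\ell}-u_{n_\ell}^{\delta_\ell}\|_2^2$ but the cleanest route is to fix $T>t$ and work on $(0,T)$. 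So I would instead integrate the inequality: for every $T>0$ and a.e. $t\in(0,T)$ the bound above holds, and I will take liminf as $\ell\to\infty$.

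The three energy contributions are handled separately. For the kinetic term, $A^{\delta_\ell}\rightharpoonup \partial_t u$ in $L_2((0,T),L_2(D))$ by \eqref{e202c}, so weak lower semicontinuity of the $L_2$-norm gives $\int_0^T\|\partial_t u\|_2^2 \le \liminf_\ell \int_0^T\|A^{\delta_\ell}\|_2^2$; combined with the elementary fact $\sum_{j=0}^{n_\ell-1}\|u_{j+1}^{\delta_\ell}-u_j^{\delta_\ell}\|_2^2/\delta_\ell = \int_0^{n_\ell\delta_\ell}\|A^{\delta_\ell}\|_2^2$ this yields $\tfrac12\int_0^t\|\partial_t u\|_2^2 \le \liminf_\ell \tfrac{1}{2\delta_\ell}\sum_{j=0}^{n_\ell-1}\|u_{j+1}^{\delta_\ell}-u_j^{\delta_\ell}\|_2^2$ once one checks $n_\ell\delta_\ell\to t$ kills the discrepancy between $\int_0^t$ and $\int_0^{n_\ell\delta_\ell}$ (the integrand is uniformly bounded in $L^1$ on compact time intervals by \eqref{iii}). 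For the mechanical energy, I would use that $E_m$ is convex and continuous on $H_D^2(D)$, hence weakly lower semicontinuous; since $u^{\delta_\ell}(t)\rightharpoonup u(t)$ in $H^2(D)$ for the particular $t$ under consideration — this follows because $u^{\delta_\ell}(t)$ is bounded in $H_D^2(D)$ by \eqref{e202} and converges to $u(t)$ in $L_2(D)$ by \eqref{e202aa}, so the whole sequence converges weakly in $H^2(D)$ — we get $E_m(u(t)) \le \liminf_\ell E_m(u^{\delta_\ell}(t))$. The contact term satisfies $E_c(u^{\delta_\ell}(t)) = 0 = E_c(u(t))$ since all iterates lie in $\bar S_0$. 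Finally, for the electrostatic energy, the weak $H^2(D)$-convergence $u^{\delta_\ell}(t)\rightharpoonup u(t)$ together with Lemma~\ref{L2}~{\bf (a)} gives the \emph{convergence} $E_e(u^{\delta_\ell}(t)) \to E_e(u(t))$, which is even stronger than lower semicontinuity.

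Putting these together: take $\liminf_{\ell\to\infty}$ in the displayed discrete inequality. The right-hand side is the constant $E(u_0)$; on the left, $\liminf$ of a sum is at least the sum of $\liminf$'s for the kinetic and mechanical parts, and since $E_e(u^{\delta_\ell}(t))$ actually converges we may add it in without loss, giving
\begin{equation*}
\frac{1}{2}\int_0^t\|\partial_t u(s)\|_2^2\,\rd s + E_m(u(t)) + E_c(u(t)) + E_e(u(t)) \le E(u_0)\,,
\end{equation*}
which is exactly the claim since $E = E_m + E_c + E_e$. The main obstacle, and the point requiring the most care, is the bookkeeping linking the \emph{discrete} dissipation sum $\frac{1}{2\delta_\ell}\sum_{j}\|u_{j+1}^{\delta_\ell}-u_j^{\delta_\ell}\|_2^2$ to the \emph{continuous} functional $\frac12\int_0^t\|\partial_t u\|_2^2$ through the piecewise-constant interpolant $A^{\delta_\ell}$: one must be careful that the upper limit of the discrete sum, $n_\ell\delta_\ell$, converges to $t$ and that the truncation error is negligible, and that the liminf inequality for $\int_0^{n_\ell\delta_\ell}\|A^{\delta_\ell}\|_2^2$ transfers correctly to the fixed interval $(0,t)$ — this is a standard but slightly delicate lower-semicontinuity-with-moving-domain argument, most safely executed by first fixing $T>t$, establishing the inequality on $(0,T)$ for a.e. $t$, and then using the continuity of $s\mapsto E(u(s))$-type arguments or simply the monotonicity in $t$ of the left-hand side to recover it for all $t>0$.
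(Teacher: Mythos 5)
Your overall plan — pass to the $\liminf$ in \eqref{e5a} and treat $E_m$, $E_c$, $E_e$, and the dissipation term separately using the convergences from Lemma~\ref{L5} — is the same as the paper's, and the handling of the three energy contributions is correct: $E_e$ converges by Lemma~\ref{L2}~(a), $E_m$ is weakly lower semicontinuous, and $E_c$ vanishes identically on $\bar S_0$.

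The gap is in the bookkeeping for the kinetic term, and it comes from misreading the definition of $A^\delta$. Since $A^\delta(s)=A_n^\delta=(u_n^\delta-u_{n-1}^\delta)/\delta$ on $[n\delta,(n+1)\delta)$ for $n\ge1$, while $A^\delta\equiv 0$ on $[0,\delta)$, the piecewise-constant interpolant is shifted one step to the right relative to the increments. Consequently
\begin{equation*}
\frac{1}{\delta_\ell}\sum_{j=0}^{n_\ell-1}\|u_{j+1}^{\delta_\ell}-u_j^{\delta_\ell}\|_2^2 = \int_0^{(n_\ell+1)\delta_\ell}\|A^{\delta_\ell}(s)\|_2^2\,\rd s \;\ge\; \int_0^t\|A^{\delta_\ell}(s)\|_2^2\,\rd s\,,
\end{equation*}
because $t<(n_\ell+1)\delta_\ell$. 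In other words, the discrete sum already dominates the integral over the \emph{fixed} interval $(0,t)$, and weak lower semicontinuity of the $L_2((0,t),L_2(D))$-norm for $A^{\delta_\ell}\rightharpoonup\partial_t u$ closes this part of the argument immediately, for \emph{every} $t>0$ — this is exactly \eqref{iii}--\eqref{303} in the paper. Your identification of the sum with $\int_0^{n_\ell\delta_\ell}\|A^{\delta_\ell}\|_2^2$ is off by two subintervals and, more importantly, is on the wrong side of $t$, which is what manufactures a "truncation error." Your attempted fixes for that phantom error also don't hold up: \eqref{iii} gives a bound on the whole integral, not equi-integrability, so the integral over $[n_\ell\delta_\ell,t)$ is not known to vanish; and the detour via "a.e.\ $t$, then extend" is itself problematic because $u$ is only known to be continuous into $H^1(D)$ (not $H^2(D)$), so $t\mapsto E(u(t))$ is not obviously continuous, nor is the left-hand side monotone in $t$ once $E(u(t))$ is included. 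Correct the sign of the shift and the rest of your argument is fine.
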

%%%%%%%%%%%%%%%%

\begin{proof}
Given $t>0$ and $\ell\ge 0$, we pick again the integer $n_\ell$ such that $t\in [n_\ell \delta_\ell,(n_\ell+1)\delta_\ell)$. Then, by \eqref{e5a}, 
\begin{equation}\label{300}
\frac{1}{2\delta_\ell}\sum_{j=0}^{n_\ell}\|u_{j+1}^\delta-u_{j}^\delta\|_{2}^2+E(u^{\delta_\ell}(t))\le E(u_0) \,.
\end{equation}
Owing to Lemma~\ref{L2}~{\bf (a)}, \eqref{e202a}, and \eqref{e202b}, we have
\begin{equation}\label{301}
\lim_{\ell\to\infty} E_e\big(u^{\delta_\ell}(t)\big)= E_e\big(u(t)\big)\,.
\end{equation}
Since $(u^{\delta_\ell}(t))_{\ell\ge 0}$ is bounded in $H^2(D)$ according to \eqref{e202}, we may extract a further subsequence (not relabeled), possibly depending on $t$, such that $\big(u^{\delta_\ell}(t)\big)_{\ell\ge 0}$ converges to  $u(t)$ weakly in $H^2(D)$ and strongly in $H^1(D)$. Hence
$$
E_m\big(u(t)\big)\le \liminf_{\ell\to\infty} E_m\big(u^{\delta_\ell}(t)\big)\,,
$$ 
which gives, together with \eqref{301},
\begin{equation}\label{302}
E\big(u(t)\big)\le \liminf_{\ell\to\infty} E\big(u^{\delta_\ell}(t)\big)\,.
\end{equation}
Moreover, due to \eqref{e202c} and \eqref{iii}, we have
\begin{equation}\label{303}
\frac{1}{2}\int_0^t \| \partial_t u(s)\|_{2}^2\,\rd s \le  \liminf_{\ell\to \infty}\frac{1}{2\delta_\ell} \sum_{j=0}^{n_\ell}\|u_{j+1}^{\delta_\ell}-u_{j}^{\delta_\ell}\|_{2}^2\,.
\end{equation}
Gathering \eqref{300}, \eqref{302}, and \eqref{303} completes the proof.
\end{proof}

\begin{proof}[Proof of Theorem~\ref{T1}]
To finish off the proof, we are left with showing that $u$ solves the variational inequality \eqref{constraint}. To this end, we recall from \eqref{e4} that
\begin{equation}\label{401}
\begin{split}
0 \le &\, \int_D\left\{ -A_{ n}^\delta+ (\tau+a\|\partial_x u_{n}^\delta\|_{2}^2)\partial_x^2 u_{ n}^\delta-g(u_{ n}^\delta)\right\} \big(u_{n}^\delta-v\big)\,\rd x\\
&
-\beta\int_D \partial_x^2 u_{ n}^\delta \partial_x^2 \big(u_{n}^\delta-v)\,\rd x
\end{split}
\end{equation}
for $n\ge 1$, $\delta\in (0,\delta_0)$, and $v\in \bar S_0$. Now, consider a non-negative function $\phi\in C_c([0,\infty))$ and $w\in L_{2,loc}([0,\infty),H^2(D))$ such that $w(t)\in \bar{S}_0$ for a.e. $t\in (0,\infty)$. Then, for $\delta$ small enough, $\mathrm{supp}\ \phi\subset (\delta,\infty)$ and, by \eqref{401},
\begin{align}
& \int_{0}^{\infty} \phi(s) \int_D \left\{ -A^{\delta}(s)+ \left( \tau+a\|\partial_x u^{\delta}(s)\|_{2}^2 \right) \partial_x^2 u^{\delta}(s)-g(u^{\delta}(s))\right\} \big(u^{\delta}(s)-w(s)\big)\,\rd x\rd s \nonumber \\
&\qquad \qquad -\int_{0}^{\infty} \phi(s) \int_D \beta\partial_x^2 u^{\delta}(s) \partial_x^2 \big(u^{\delta}(s)-w(s))\,\rd x\rd s \nonumber \\
& \qquad = \sum_{n=1}^{\infty}\int_{n \delta}^{(n+1) \delta} \phi(s) \int_D\left\{ -A_{n}^{\delta}+ \left( \tau+a\|\partial_x u_{n}^{\delta}\|_{2}^2 \right) \partial_x^2 u_{n}^{\delta}-g(u_{n}^{\delta})\right\} \big(u_{n}^{\delta}-w(s)\big)\,\rd x\rd s \nonumber \\
&\qquad\qquad -\sum_{n=1}^{\infty} \int_{n \delta}^{(n+1) \delta} \phi(s) \int_D \beta \partial_x^2 u_{n}^{\delta} \partial_x^2 \big(u_{n}^{\delta}-w(s))\,\rd x\rd s \nonumber \\
&\qquad \ge 0\,. \label{pam}
\end{align}
On the one hand, it follows from \eqref{e202t} that
\begin{equation*}
\begin{split}
& \lim_{\ell\to\infty} \int_{0}^{\infty} \phi(s) \int_D \left\{ -A^{\delta_\ell}(s)+ (\tau+a\|\partial_x u^{\delta_\ell}(s)\|_{2}^2)\partial_x^2 u^{\delta_\ell}(s)-g(u^{\delta_\ell}(s))\right\} \big(u^{\delta_\ell}(s)-w(s)\big)\,\rd x\rd s \\
& \qquad = \int_{0}^{\infty} \phi(s) \int_D \left\{ -\partial_t u(s)+ (\tau+a\|\partial_x u(s)\|_{2}^2)\partial_x^2 u(s)-g(u(s))\right\} \big(u(s)-w(s)\big)\,\rd x\rd s\,.
\end{split}
\end{equation*}
On the other hand, we infer from \eqref{e202b} and the non-negativity of $\phi$ that
\begin{equation*}
\lim_{\ell\to\infty} \int_{0}^{\infty} \phi(s) \int_D \beta\partial_x^2 u^{\delta_\ell}(s) \partial_x^2 w(s)\,\rd x\rd s = \int_{0}^{\infty} \phi(s) \int_D \beta\partial_x^2 u(s) \partial_x^2 w(s)\,\rd x\rd s
\end{equation*}
and
\begin{equation*}
\liminf_{\ell\to\infty} \int_{0}^{\infty} \phi(s) \int_D \beta|\partial_x^2 u^{\delta_\ell}(s)|^2 \,\rd x\rd s \ge \int_{0}^{\infty} \phi(s) \int_D \beta|\partial_x^2 u(s)|^2 \,\rd x\rd s\,.
\end{equation*}
Collecting the above identities, taking $\delta=\delta_\ell$ in \eqref{pam}, and letting $\ell\to\infty$, we conclude that
\begin{equation*}
\begin{split}
\int_{0}^{\infty} \phi(s) \int_D &\left\{ -\partial_t u(s)+ (\tau+a\|\partial_x u (s)\|_{2}^2)\partial_x^2 u (s)-g(u (s))\right\} \big(u (s)-w(s)\big)\,\rd x\rd s\\
&\qquad \quad -\int_{0}^{\infty} \phi(s) \int_D \beta\partial_x^2 u (s) \partial_x^2 \big(u (s)-w(s))\,\rd x\rd s \ge 0\,.
\end{split}
\end{equation*}
That is, recalling the definition \eqref{e210} of $\zeta$,
$$
\int_{0}^{\infty} \phi(s)\ \langle \zeta(s), u(s)-w(s)\rangle_{H_D^2}\,\rd s \ge 0
$$
for any $w\in L_{2,loc}([0,\infty), H_D^2(D))$ satisfying $w(t)\in \bar S_0$ for a.e. $t\in (0,\infty)$ and any non-negative $\phi\in C_c([0,\infty))$. In particular, for all $v\in \bar{S}_0$ and non-negative $\phi\in C_c([0,\infty))$, the choice $w(t)\equiv v$, $t>0$, in the above inequality gives
\begin{equation*}
\int_{0}^{\infty} \phi(s)\ \langle \zeta(s), u(s)-v\rangle_{H_D^2}\,\rd s \ge 0\,,
\end{equation*}
which implies, since $\bar{S}_0$ is separable, that
$$
\zeta(t)\in \partial\mathbb{I}_{\bar S_0} \big(u(t)\big)\ \text{ for a.a. }\ t\ge 0\,.
$$
Finally, since $\partial_t u\in L_{2,loc}([0,\infty), L_2(D))$, it follows from the definition \eqref{e210} of $\zeta$ that $u$ solves \eqref{evoleq} in the sense of Definition~\ref{D1}, and the proof of  Theorem~\ref{T1} is complete.
\end{proof}

%%%%%%%%%%%%%%%%
%%%%%%%%%%%%%%%%
\section{Proof of Corollary~\ref{C2}}\label{S5}
%%%%%%%%%%%%%%%%
%%%%%%%%%%%%%%%%

We finally derive the additional features enjoyed by $\zeta$ as stated in Corollary~\ref{C2}.

\begin{proof}[Proof of Corollary~\ref{C2}]
Let $u$ be a weak solution to \eqref{evoleq} in the sense of Definition~\ref{D1} and define $\zeta$ by \eqref{zeta}. We introduce the set 
\begin{equation}
\mathcal{Z} := \{ t\in (0,\infty)\, :\, \zeta(t)\in \partial\mathbb{I}_{\bar S_0} \big(u(t)\big) \}\,, \label{zeta0}
\end{equation}
and observe that $|\mathcal{Z}|=0$ since $\zeta$ satisfies \eqref{constraint}. Moreover, since $u(t)+v$ belongs to $\bar{S}_0$ for any non-negative $v\in C_c^\infty(D)$, it readily follows from \eqref{zeta0} that
\begin{equation}
\langle -\zeta(t) , v \rangle_{H_D^2} \ge 0\,, \qquad t\in \mathcal{Z}\,. \label{zeta00}
\end{equation}
That is, for $t\in \mathcal{Z}$, $-\zeta(t)$ is a non-negative distribution on $D$ and thus a non-negative Radon measure, see, e.g., \cite[Proposition~6.6]{DD2012}.

Next, let $T>0$. According to the regularity of $u$, 
\begin{equation}
K_T := \sup_{t\in [0,T]}\{\|u(t)\|_{H^2}\} < \infty\,. \label{zeta1}
\end{equation}
\noindent\textbf{Step~1.} Let $t\in [0,T]$ and $x\in [0,L]$. Since $u(t)\in H_D^2(D)$, it follows from \eqref{zeta1} that 
\begin{align*}
u(t,x) & = u(t,L) + (x-L) \partial_x u(t,L) + \int_x^L (y-x) \partial_x^2 u(t,y)\,\rd y \\
& \ge - \int_x^L (y-x) |\partial_x^2 u(t,y)|\,\rd y \ge - \frac{(L-x)^{3/2}}{\sqrt{3}} \|\partial_x^2 u(t)\|_{2} \\
& \ge - K_T (L-x)^{3/2}\,.
\end{align*}
Hence, for $x\in [x_T,L]$ with 
 \begin{equation*}
x_T := \max\left\{ L - \left( \frac{H}{2K_T} \right)^{2/3} , \frac{L}{2} \right\}\in [L/2,L)\,, 
\end{equation*}
we deduce that $u(t,x)\ge -H/2$. Using the same argument for $x\in [-L,0]$, we end up with
\begin{equation}
u(t,x) \ge -\frac{H}{2}\,, \qquad x\in [-L,-x_T] \cup [x_T,L]\,, \quad t\in [0,T]\,. \label{zeta2}
\end{equation}
Now, consider $t\in [0,T]\cap \mathcal{Z}$ and $v\in C_c^\infty(D)$ such that $\mathrm{supp}\, v \subset  [-L,-x_T] \cup [x_T,L]$. For $\theta\in (0,1)$ small enough, we infer from \eqref{zeta2} that $u(t) \pm\theta v$ belongs to $\bar{S}_0$, so that \eqref{constraint} entails
\begin{equation*}
0 \le \langle \zeta(t) , u(t) - u(t) \mp \theta v \rangle_{H_D^2} = \mp \theta \langle \zeta(t) , v \rangle_{H_D^2}\,.
\end{equation*}
Since $\theta$ is positive, we conclude that $\langle \zeta(t) , v \rangle_{H_D^2}=0$. Consequently,
\begin{equation}
\mathrm{supp}\, \zeta(t) \subset [-x_T,x_T]\,, \qquad t\in [0,T]\cap \mathcal{Z}\,. \label{zeta3}
\end{equation}

\medskip

\noindent\textbf{Step~2.} We now fix a non-negative function $\chi_T\in C_c^\infty(D)$ such that $\chi_T\equiv 1$ on $[-x_T,x_T]$. Then, for $v\in C_c^\infty(D)$ and $t\in [0,T]\cap \mathcal{Z}$, the function $u(t) + \chi_T (\|v\|_\infty \pm v)$  belongs to $\bar{S}_0$ and it follows from \eqref{zeta00} and \eqref{zeta3} that
\begin{equation*}
0 \le \langle -\zeta(t) , \chi_T (\|v\|_\infty \pm v) \rangle_{H_D^2} =  \|v\|_\infty \langle -\zeta(t) , \chi_T \rangle_{H_D^2} \pm \langle -\zeta(t) , v \rangle_{H_D^2}\,,
\end{equation*}
the identity $\langle \zeta(t) , \chi_T v \rangle_{H_D^2} = \langle \zeta(t) , v \rangle_{H_D^2}$ being guaranteed 
by \eqref{zeta3} and the properties of $\chi_T$. Thus, 
\begin{equation*}
\left| \langle -\zeta(t) , v \rangle_{H_D^2} \right| \le \|v\|_\infty \langle - \zeta(t) , \chi_T \rangle_{H_D^2} \,,
\end{equation*}
and the density of $C_c^\infty(D)$ in $C_0(D)$ and the already established non-negativity of $-\zeta$ ensure that $-\zeta(t)$ belongs to $\mathcal{M}_+(D)$ with
\begin{equation}
\|-\zeta(t)\|_{\mathcal{M}} \le \langle - \zeta(t) , \chi_T \rangle_{H_D^2}\,, \qquad t\in [0,T]\cap \mathcal{Z}\,. \label{zeta4}
\end{equation}
Since $\zeta\in L_2((0,T),H^{-2}(D))$, an immediate consequence of \eqref{zeta4} is that $\zeta\in L_2((0,T),\mathcal{M}(D))$.

Finally, let $s\in (2,7/2)$. According to \cite[Lemma~4.1~(iii)]{AQ2004}, $\mathcal{M}(D)$ is continuously embedded in $H_D^{s-4}(D)$, so that the just established regularity of $\zeta$ implies that $\zeta\in L_2((0,T),H_D^{s-4}(D))$. Together with the regularity of $u$, $\partial_t u$, and $g(u)$, this property and \eqref{zeta} ensure that $\partial_x^4 u$ belongs to $L_2((0,T),H_D^{s-4}(D))$. Consequently, it follows from elliptic regularity theory that  $u$ belongs to $L_2((0,T),H_D^{s}(D))$. This completes the proof of Corollary~\ref{C2}.
\end{proof}

%%%%%%%%%%%%%%%%
\begin{remark}
Since $u\in C([0,\infty)\times \bar{D})$ by Theorem~\ref{T1} and since $H^1(D)$ embeds continuously in $C(\bar{D})$, it easily follows from \eqref{constraint} (by an argument similar to that leading to \eqref{zeta3}) that 
\begin{equation*}
\mathrm{supp}\, \zeta(t) \subset \mathcal{C}(u(t)) \;\text{ for a.e. }\; t\in (0,\infty)\,,
\end{equation*}
the coincidence set $\mathcal{C}(u(t))$ being defined in \eqref{coincidence}.
\end{remark}
%%%%%%%%%%%%%%%%

%%%%%%%%%%%%%%%%
\bibliographystyle{siam}
\bibliography{BG_Transmission_Model}
%%%%%%%%%%%%%%%%

%%%%%%%%%%%%%%%%
%%%%%%%%%%%%%%%%
\end{document}